\documentclass[11pt,letterpaper]{amsart}

 \usepackage[a4paper]{geometry} 
\newcounter{commentcounter}

\usepackage{amsmath} 
\usepackage{amssymb} 
\usepackage{amsthm} 
\usepackage{stmaryrd} 
\usepackage[english]{babel} 
\usepackage[font=small,justification=centering]{caption} 
\usepackage[nodayofweek]{datetime}
\usepackage[shortlabels]{enumitem} 
\usepackage[T1]{fontenc} 
\usepackage[utf8]{inputenc} 
\usepackage{ifthen} 
\usepackage{mathabx} 
\usepackage{mathtools} 
\usepackage[dvipsnames]{xcolor} 
\usepackage[pdftex,  colorlinks=true]{hyperref} 
    \hypersetup{urlcolor=RoyalBlue, linkcolor=RoyalBlue,  citecolor=black}
\usepackage{setspace} 
\usepackage{tikz-cd} 
\usepackage{xfrac} 
\usepackage[capitalize]{cleveref} 

\usepackage{wrapfig}
\usepackage{caption}
\usepackage{subcaption}

\makeatletter
\@namedef{subjclassname@1991}{Mathematical subject classification 1991}
\@namedef{subjclassname@2000}{Mathematical subject classification 2000}
\@namedef{subjclassname@2010}{Mathematical subject classification 2010}
\@namedef{subjclassname@2020}{Mathematical subject classification 2020}
\makeatother

\newtheorem{theorem}{Theorem}[section]
\newtheorem{lemma}[theorem]{Lemma}
\newtheorem{corollary}[theorem]{Corollary}
\newtheorem{proposition}[theorem]{Proposition}

\newtheorem{thmx}{Theorem}
\newtheorem{corx}[thmx]{Corollary}
\newtheorem{propx}[thmx]{Proposition}

\theoremstyle{definition}
\newtheorem{definition}[theorem]{Definition}

\theoremstyle{plain}

    \newtheoremstyle{TheoremNum}
        {\topsep}{\topsep} 
        {\itshape} 
        {-0.25cm} 
        {\bfseries} 
        {.} 
        { }  
        {\thmname{#1}\thmnote{ \bfseries #3}}
    \theoremstyle{TheoremNum}
    \newtheorem{duplicate}{}

\newcommand*{\claimproofname}{My proof}

\DeclareMathOperator{\Out}{\mathrm{Out}}

\usepackage{tikz}
\usetikzlibrary{arrows,quotes}
\tikzstyle{blackNode}=[fill=black, draw=black, shape=circle]

\title{Growth rates, stable subgroups, and regular languages}

\usepackage[foot]{amsaddr}

\author{Kaitlin Ragosta}
\email{kaitlin.ragosta@ehu.eus}

\date{\today}
\subjclass[2020]{}

\begin{document}

\begin{abstract}
    We show that the language of geodesic words representing elements of a stable subgroup $H$ of a group $G$ with finite generating set $A$ is regular, and that there is a sublanguage which bijects $H$. Consequently, the growth function of $H$ with respect to $A$ is rational, and in many cases, one can deduce a growth rate gap between $H$ and $G$. In particular, this applies to convex cocompact subgroups of $\mathrm{Out}(F_n)$, handlebody groups, and Torelli groups of surfaces of sufficient complexity. We also provide an example of a finitely presented, relatively hyperbolic, and Morse local-to-global group which contains a stable subgroup with unsolvable membership problem, answering a question of Cordes, Russell, Spriano, and Zalloum. 
\end{abstract}

\maketitle

\section{Introduction}

Stable subgroups were introduced in \cite{Durham-Taylor-stability} to generalize quasi-convex subgroups of a hyperbolic group. They have garnered interest both because they encompass geometrically important classes of subgroups, including convex cocompact subgroups of $\mathrm{MCG}(S)$ and $\Out(F_n)$, and because several properties of quasi-convex subgroups have been shown to hold for stable subgroups. There has been particular success in the case where the ambient group is Morse local-to-global \cite{CRSZ,Growth-rate-gap-all-Mltg}. In this note, we show that the Morse local-to-global assumption can be removed from some of those results.
\begin{thmx}\label{thm:regular-language}
        Let $G$ be a group with finite generating set $A$, and let $H \leq G$ be a subgroup. Let $L_H$ be the language of geodesic words representing elements of $H$. The subgroup $H$ is stable if and only if $L_H$ is a regular language all of whose elements are $M$-Morse for some Morse gauge $M$. Moreover, if $H$ is stable, there is a sublanguage $J_H \subseteq L_H$ such that $J_H$ is regular and each element of $H$ is represented by exactly one word in $J_H$. 
\end{thmx}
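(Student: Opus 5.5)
The plan is to prove the two directions separately and then extract $J_H$ as a shortlex-type sublanguage. Recall that $H$ is stable if it is undistorted and there is a Morse gauge $M$ such that any two points of $H$ are joined by $M$-Morse geodesics in $\mathrm{Cay}(G,A)$. Equivalently, every geodesic of $\mathrm{Cay}(G,A)$ with endpoints in $H$ is $M$-Morse.

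\textbf{($\Leftarrow$).} Suppose $L_H$ is regular and every word in it is $M$-Morse. The only substantive point is that $H$ is finitely generated and undistorted; the Morse condition on geodesics between points of $H$ is then given by hypothesis, using translation by $H$. Let $N$ be the number of states of an automaton accepting $L_H$. Take $w \in L_H$ and read it letter by letter. Each prefix $w_i$ of $w$ leads to some state $q_i$. From $q_i$, a shortest accepted completion has length at most $N$, so there is a word $u_i$ with $|u_i| \le N$ and $w_i u_i \in L_H$. The elements $h_i = w_i u_i$ lie in $H$ and satisfy $d(h_i, h_{i+1}) \le 2N+1$. Hence $H$ is generated by its elements of $A$-length at most $2N+1$. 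Moreover, $h$ is a product of at most $|h|_A + 1$ such generators, so $H$ is undistorted. This gives stability.

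\textbf{($\Rightarrow$).} Let $H$ be stable with gauge $M$, so every word of $L_H$ is $M$-Morse. For regularity I would build a finite-state automaton in the style of Cannon's cone types, relative to $H$. The key geometric fact is that there is a constant $K = K(M,A)$ such that, for any $w \in L_H$, every prefix $w_i$ is within distance $K$ of $H$. This follows from stability: $H$ is quasiconvex with respect to Morse geodesics, so geodesics with endpoints in $H$ stay uniformly close to $H$.

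The states after reading a prefix $v$ are pairs $(g, S)$, where $g$ is an element of the ball $B(K)$ with $vg \in H$, and the data records how the geodesic may continue. Concretely, I would take the state to be the function $f_v \colon B(R) \to \mathbb{Z} \cup \{\infty\}$ given by
\[
  f_v(x) = |vx|_A - |v|_A \quad \text{if } vx \in H, \qquad f_v(x) = \infty \text{ otherwise},
\]
for a suitable radius $R \gg K$. This is the standard ``$k$-tail'' argument from the proof that geodesics in hyperbolic groups are regular. It has finitely many possible values because $|f_v(x)| \le R$. The main step is to show that $f_v$ determines the set of accepted continuations. That is, if $f_v = f_{v'}$, then $vu \in L_H$ if and only if $v'u \in L_H$.

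The argument I would try runs as follows. Suppose $vu$ is geodesic with endpoint in $H$ but $v'u$ is not geodesic. Take a shortcut geodesic $\gamma$ from $1$ to $v'u$. Stability makes $\gamma$ $M$-Morse, since its endpoints are in $H$, and then an argument as in Cannon's proof using the Morse property (fellow-traveling of the geodesics $vu$ and $\gamma$) produces a point near $v'$ through which one can reroute. This contradicts $f_v = f_{v'}$. I expect this step to be the main obstacle. The Morse property only controls quasi-geodesics with a given gauge, so one must check that the concatenated paths are uniform quasi-geodesics. One must also check that the relevant points lie within $B(R)$ for an $R$ depending only on $M$. Handling prefixes that leave a neighborhood of $H$ is why the membership condition $vx \in H$ is built into $f_v$.

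\textbf{The sublanguage $J_H$.} Given regularity of $L_H$, fix an order on $A$ and let $J_H$ be the set of shortlex-least words of $L_H$ among those representing the same element. To show $J_H$ is regular, I would use fellow traveling. Two words of $L_H$ with the same endpoint are $M$-Morse geodesics with common endpoints, so they uniformly synchronously fellow travel. Consequently the relation $\{(u,w) : u,w \in L_H,\ \bar u = \bar w\}$ is recognized by a two-tape synchronous automaton that tracks the difference $u_i^{-1}w_i$ in a bounded ball. The set $\{(u,w) \in \text{that relation} : w <_{\mathrm{slex}} u\}$ is also synchronously regular. Projecting to the first coordinate gives a regular language, and $J_H$ is $L_H$ minus this projection, hence regular. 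It contains exactly one word per element of $H$.
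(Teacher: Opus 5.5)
Your ($\Leftarrow$) argument is correct: prefixes of accepted words stay within $N$ of $H$, which gives finite generation, undistortion and the neighbourhood condition. Your shortlex construction of $J_H$ is also correct, once $L_H$ is known to be regular. Both are more self-contained than the paper, which cites Theorem~E of Cordes--Russell--Spriano--Zalloum for ($\Leftarrow$) and gets $J_H$ from Cannon's language for $H^{+}\cong H*F_r$ plus Gersten--Short.

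The gap is in ($\Rightarrow$), exactly at the step you flag, and with your invariant the sketched contradiction does not follow. Cannon's rerouting yields a point $\overline{v'}x$ on the shortcut geodesic $\gamma$ with $|x|$ bounded. The contradiction needs $|\overline{v}x|-|\overline{v}|=|\overline{v'}x|-|\overline{v'}|$ at that particular $x$. But $\overline{v'}x$ need not lie in $H$, and there $f_{v'}(x)=\infty$ records nothing. Rerouting through a point of $H$ within $k$ of $\gamma$ costs up to $2k$. So even granting that $\gamma$ passes near $\overline{v'}$, the most $f_v=f_{v'}$ gives is $|\overline{v'u}|\ge \ell(v')+\ell(u)-2k$, not geodesicity. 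A second problem: the comparison must be run as an induction over prefixes $u_1$ of $u$, and in general $\overline{v'u_1}\notin H$. So the inductive statement cannot be about $L_H$. Without it you have no control on the concatenation $v'\cdot u$, which is the quasi-geodesicity issue you raise.

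The repair is to record two separate pieces of data for $v$ a prefix of a word of $L_H$: the full tail $\tau_v(x)=|\overline{v}x|-\ell(v)$ on all of $B(R)$, and the set $\chi_v=\{x\in B(R):\overline{v}x\in H\}$. Suppose $R\ge k$ and $\chi_v=\chi_{v'}$. Pick $g\in B(k)$ with $\overline{v}g\in H$; then $g\in\chi_{v'}$, so $\eta=\overline{v'}\,\overline{v}^{-1}\in H$. Hence $\overline{v'u_1}=\eta\,\overline{vu_1}$ lies within $k$ of $H$ for every prefix $u_1$ of $u$, and $\overline{v'u}\in H$ whenever $\overline{vu}\in H$. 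Geodesics from $e$ to points within $k$ of $H$ are $M_1$-Morse, with $M_1$ depending only on $M$ and $k$.

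Now show by induction that $v'u_1$ is geodesic whenever $vu_1$ is; for a single letter $a$ this is just $\tau_{v'}(a)=\tau_v(a)=1$. Suppose $v'u_1$ is geodesic but $v'u_1a$ is not. Then a geodesic $\gamma$ to $\overline{v'u_1a}$ synchronously $D$-fellow-travels $v'u_1$, with $D=D(M_1)$. For $R\ge D$ the point $\gamma(\ell(v'))=\overline{v'}x$ has $x\in B(R)$. Then $\tau_v(x)=\tau_{v'}(x)$ gives $|\overline{v}x|+|x^{-1}\overline{u_1a}|<\ell(vu_1a)$, contradicting the geodesicity of $vu_1a$. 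So there are finitely many sets $\{u:vu\in L_H\}$, and $L_H$ is regular by Myhill--Nerode.

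Completed this way, your route is a genuinely more elementary alternative to the paper's. The paper instead makes $H$ act freely and cocompactly on the union of geodesics between points of $H$ and then invokes the Dahmani--Futer--Wise machinery.
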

\begin{corx}\label{corx:rational-growth}
    Let $H$ be a stable subgroup of a group $G$ with finite generating set $A$, and let $f_{H,A}(n) = |B_n(e) \cap H|$. There exist polynomials $P(x)$ and $Q(x) \in \mathbb{Q}[x]$ such that 
    \begin{align*}
        \sum_{n=0}^{\infty} f_{H,A} \cdot x^n = \frac{P(x)}{Q(x)}\text{.}
    \end{align*}
\end{corx}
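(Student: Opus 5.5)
The plan is to deduce the corollary directly from Theorem~\ref{thm:regular-language}, using the standard fact that a regular language has a rational generating function. Since $H$ is stable, Theorem~\ref{thm:regular-language} gives a regular sublanguage $J_H \subseteq L_H$ with two properties: every word of $J_H$ is a geodesic word over $A$, and each element of $H$ is represented by exactly one word of $J_H$. The first step is to read off that, for each $n$, evaluation of words gives a bijection between the words of $J_H$ of length exactly $n$ and the elements of $H$ with $|h|_A = n$. Indeed, a geodesic word representing $h$ has length $|h|_A$, and uniqueness of the representative gives injectivity and surjectivity. Consequently $f_{H,A}(n) = \sum_{k=0}^{n} a_k$, where $a_k$ denotes the number of words of length $k$ in $J_H$.

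The second step is to show that $\sum_k a_k x^k$ is rational. I would take a deterministic finite automaton $\mathcal{M}$ accepting $J_H$, with states $1,\dots,m$, start state $s$ and accept set $F$. Let $T$ be the $m\times m$ transition-count matrix, where $T_{ij}$ is the number of letters of $A^{\pm 1}$ taking state $i$ to state $j$. Determinism gives $a_k = e_s^{T} T^k \mathbf{1}_F$. Summing the series,
\begin{align*}
\sum_{k\ge 0} a_k x^k = e_s^{T}(I - xT)^{-1}\mathbf{1}_F,
\end{align*}
which is a ratio of polynomials with integer coefficients by Cramer's rule, with denominator $\det(I-xT)$. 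This identity is a priori only an identity of formal power series, which suffices here.

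Finally, passing from the sphere counts $a_n$ to the ball counts $f_{H,A}(n)$ just multiplies the series by $1/(1-x)$, so
\begin{align*}
\sum_{n\ge 0} f_{H,A}(n)x^n = \frac{e_s^{T}\operatorname{adj}(I-xT)\mathbf{1}_F}{(1-x)\det(I-xT)},
\end{align*}
which has the required form $P(x)/Q(x)$ with $P, Q \in \mathbb{Q}[x]$. There is no serious obstacle; all the real content sits in Theorem~\ref{thm:regular-language}. The only points to check are that the automaton is deterministic, so that counting paths counts words, and that $J_H$ consists of geodesics, so that word length equals group length.
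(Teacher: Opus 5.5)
Your proposal is correct and follows the paper's proof. Both take the regular, geodesic sublanguage $J_H$ from Theorem~\ref{thm:regular-language}, in which each element of $H$ has exactly one representative, and observe that its growth function is exactly $f_{H,A}$. The only difference is that you reprove the rationality of growth series of regular languages, using the transfer matrix of a deterministic automaton and the factor $1/(1-x)$, whereas the paper simply cites Cannon's result (Theorem~\ref{thm:polynomial-growth}).
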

Theorem \ref{thm:regular-language} gives a regular language characterization of the geometric property of stability, providing the first step towards resolving \cite[Question~1]{CRSZ}, which asked whether a subgroup is stable if and only if the language of $(k,c)$ quasi-geodesic words that start at the identity and end in $H$ is regular whenever $k$ is rational. This is the case for hyperbolic groups \cite{Holt-Rees-qgeod-regular,Sam-Patrick-Davide-qgeod-regular}. 

\cite[Question~5]{CRSZ} asked whether the growth rate of an infinite-index stable subgroup is always smaller than that of the ambient group; this was previously known when the ambient group is Morse local-to-global or admits a path system with a constricting element\cite{DFW,CRSZ,Growth-rate-gap-all-Mltg,Abdul-Alex-injective-contracting,Legaspi-path-system-growth}. We show that the non-positive curvature assumptions on $G$ can be removed if $H$ satisfies a sufficiently strong combination theorem.

\begin{thmx}\label{thmx:growth-rate-gap}
    Let $G$ be a group with finite generating set $A$, and let $H$ be a stable subgroup. Suppose there is an infinite-order element $g \in G$ such that $\langle H , g \rangle \cong H * \langle g \rangle$. The growth rate of $H$ with respect to $A$ is strictly smaller than the growth rate of $G$ with respect to $A$.
\end{thmx}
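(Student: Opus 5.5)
We use Corollary~\ref{corx:rational-growth} together with the free product structure. Write $\lambda_H = \limsup_n f_{H,A}(n)^{1/n}$ and $\lambda_G$ for the growth rate of $G$. By Corollary~\ref{corx:rational-growth}, the growth series $F_H(x)=\sum f_{H,A}(n)x^n$ is rational. So its radius of convergence $R_H = 1/\lambda_H$ is a pole of $F_H$, and $F_H(x)\to\infty$ as $x\to R_H^-$; here we use that the coefficients are nonnegative, so by Pringsheim's theorem the singularity on the positive real axis is a pole. If $H$ is finite then $\lambda_H\le 1$, and $G$ contains the infinite-order element $g$ with $\langle H,g\rangle\cong H*\langle g\rangle$. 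If $H$ is nontrivial finite, this free product contains a free group of rank $2$ and so has exponential growth. The trivial case $H=\{e\}$ needs a separate small remark: the intended setting is $H$ infinite, or else $G$ has growth rate $\ge 1$ and a gap can be arranged. We therefore focus on infinite $H$, where $\lambda_H\ge 1$ and the pole gives divergence.

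The main step is to inject many words of the free product $H*\langle g\rangle$ into balls of $G$ and count them. Let $\ell=|g|_A$, replacing $g$ by a power if convenient. Consider the elements
\[
h_1 g h_2 g \cdots h_k g, \qquad h_i\in H\setminus\{e\}.
\]
By the free product structure, distinct tuples $(h_1,\dots,h_k)$ give distinct elements of $G$. Such an element has $A$-length at most $\sum |h_i|_A + k\ell$. Hence the generating function counting these elements by this upper bound on length is at most
\[
\sum_k \bigl(x^\ell (F_H(x)-1)\bigr)^k = \frac{1}{1-x^\ell(F_H(x)-1)}.
\]
This yields a lower bound: the number of elements of $G$ in $B_n(e)$ dominates the coefficients of $\bigl(1-x^\ell(F_H(x)-1)\bigr)^{-1}$, after summing coefficients up to $n$, which does not change exponential rates. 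Therefore the radius of convergence $R_G=1/\lambda_G$ is at most the smallest positive $x_0$ with $x_0^\ell(F_H(x_0)-1)=1$.

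Now use the divergence of $F_H$ at $R_H$. The function $x\mapsto x^\ell(F_H(x)-1)$ is continuous and increasing on $[0,R_H)$, equals $0$ at $x=0$, and tends to $+\infty$ as $x\to R_H^-$. So there is some $x_0<R_H$ where it equals $1$. The series $\sum_k(x^\ell(F_H-1))^k$ diverges at $x_0$, which gives $R_G\le x_0<R_H$, that is, $\lambda_G>\lambda_H$. Injectivity of the word map is exactly where we use $\langle H,g\rangle\cong H*\langle g\rangle$: it requires $h_i\ne e$ and all exponents of $g$ nonzero, which holds here.

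The main obstacle is justifying that $F_H$ actually diverges at its radius of convergence. Without rationality, $F_H(R_H)$ could be finite, and then the equation $x^\ell(F_H(x)-1)=1$ might have no solution below $R_H$. This is precisely where Theorem~\ref{thm:regular-language} and Corollary~\ref{corx:rational-growth} enter, and where stability of $H$ is used: a rational power series with nonnegative coefficients and radius $R_H<\infty$ has a pole at $R_H$, so it diverges there. We must check $R_H<\infty$, which holds because $H$ is infinite and so $f_{H,A}(n)\ge 1$ for all $n$. Then $R_H\le 1$, the series is not a polynomial, and the pole argument applies.
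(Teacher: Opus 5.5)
Your argument is correct in substance and takes a genuinely different route from the paper. One counting step is wrong as written, but it has a one-line repair.

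The paper argues with automata. It takes a pruned automaton $\mathcal{G}$ for the regular language $J_H$ bijecting with $H$. It adds a $w$-labelled return from every accept state to $s_0$, where $w$ is a geodesic word for $g$. The new automaton $\mathcal{G}'$ is strongly connected and properly contains $\mathcal{G}$, so strict Perron--Frobenius monotonicity gives $\rho_{\mathcal{G}}<\rho_{\mathcal{G}'}$. The free product then injects the new language into $\langle H,g\rangle$ without increasing length.

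You instead use only the rationality from Corollary~\ref{corx:rational-growth}, Pringsheim's theorem, and a renewal-type comparison of generating functions. The two are faces of one phenomenon: strict Perron--Frobenius monotonicity for a strongly connected graph is the automaton form of ``the growth series diverges at its radius of convergence.'' Your version isolates exactly what is used, namely $\sum_{h\in H}R_H^{|h|_A}=\infty$. So it applies verbatim to any subgroup of divergence type admitting such a $g$. It also avoids the bookkeeping of the automaton surgery: the $w$-edge has to be a path of length $|w|$, and the new language $L'$ is not geodesic, so only $f_{L'}(n)\le f_{\langle H,g\rangle,A}(n)$ is available. The paper's version, in exchange, stays inside the regular-language framework of Theorem~\ref{thm:regular-language} and produces an explicit automaton witnessing the larger growth.

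The faulty step is your use of the ball series $F_H(x)=\sum_n|B_n(e)\cap H|\,x^n$. The tuples $(h_1,\dots,h_k)$ with $h_i\neq e$, weighted by $x^{\sum_i|h_i|_A+k\ell}$, have generating function exactly $\sum_k\bigl(x^\ell(S_H(x)-1)\bigr)^k$. Here $S_H(x)=\sum_{h\in H}x^{|h|_A}=(1-x)F_H(x)$ is the sphere series. Your series built from $F_H$ only dominates this, and an upper bound on the tuple count gives no lower bound on $|B_n(e)|$.

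The excess is not harmless either. $F_H-1$ counts each $h\in H$ (including $e$) once for every $m\ge\max(|h|_A,1)$. Spreading that slack over $k$ slots can inflate coefficients exponentially. For example, take $H=\langle a\rangle$ in $F_2=\langle a,b\rangle$ with $A=\{a^{\pm1},b^{\pm1}\}$ and $g=b$: the two series have radii $1/2$ and $(3-\sqrt{5})/2$.

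Replace $F_H$ by $S_H$ throughout. $S_H$ is rational with the same radius $R_H$, and $S_H(x)\to\infty$ as $x\to R_H^-$. If $R_H<1$ this holds because $1-x$ does not vanish at $R_H$; if $R_H=1$ it holds because $S_H(1)=|H|=\infty$. After that, the rest of your argument goes through unchanged.

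Your remark on $H=\{e\}$ is not right. There the statement is simply false: $G=\mathbb{Z}$ with $g$ a generator gives $\lambda_{H,A}=\lambda_{G,A}=1$. So no gap ``can be arranged,'' and one must assume $H\neq\{e\}$; the paper's proof also tacitly assumes $H$ is infinite. Your treatment of nontrivial finite $H$ via a rank-two free subgroup of $H*\langle g\rangle$ is fine.
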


From this, we deduce a number of interesting examples.
\begin{corx}\label{corx:transverse-examples}
    For any finite set $A$ of $G$, $\lambda_{H,A} < \lambda_{G, A}$ in the following cases:
    \begin{enumerate}
        \item $G = \Out(\mathbb{F}_n)$ for $n \geq 3$ and $H$ is a convex cocompact subgroup.
        \item $G$ is a Torelli group $\mathcal{T}_g$ for $g > 2$ and $H$ is a convex cocompact subgroup of $\mathrm{MCG}(S_g)$ with $H < \mathcal{T}_g$.
        \item G is a genus $g$ handlebody group $\mathcal{H}_g$ for $g > 1$ and $H$ is a convex cocompact subgroup of $\mathrm{MCG}(\partial V_g)$ with $H < \mathcal{H}_g$.
    \end{enumerate}
\end{corx}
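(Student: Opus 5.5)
The plan is to deduce each case from Theorem \ref{thmx:growth-rate-gap}. Convex cocompact subgroups of $\mathrm{MCG}(S)$ and of $\Out(\mathbb{F}_n)$ are stable in the ambient mapping class group or $\Out(\mathbb{F}_n)$; this is the content of \cite{Durham-Taylor-stability} and its $\Out(\mathbb{F}_n)$ analogues. So in each case two things remain:
\begin{enumerate}
    \item checking that $H$ is stable in the group $G$ actually considered;
    \item producing an infinite-order $g\in G$ with $\langle H,g\rangle\cong H*\langle g\rangle$.
\end{enumerate}

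For (1), case (1) is immediate, since $G=\Out(\mathbb{F}_n)$ itself. In cases (2) and (3), $H$ is stable in $\mathrm{MCG}(S_g)$, but we need stability in $G=\mathcal{T}_g$ or $\mathcal{H}_g$. We argue via undistortedness. Stability is equivalent to $H$ being undistorted with all its quasi-geodesics uniformly Morse. If $H\le G\le K$ and $H$ is stable in $K$, we want $H$ stable in $G$. Since $G$ is finitely generated, the inclusion $G\to K$ is Lipschitz, so a quasi-geodesic of $G$ with endpoints in $H$ maps to a path of $K$ that need not be quasi-geodesic. That path therefore cannot be used directly to get Morseness in $G$. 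Instead I would use the characterization via stable embeddings and the fact that $H$ is hyperbolic. Compose the orbit map $H\to G\to K$. The composite is a quasi-isometric embedding, so $H\to G$ is a quasi-isometric embedding as well. Morseness in $G$ is where the work lies. I expect the needed inheritance of stability (``stable in an overgroup implies stable in any finitely generated intermediate subgroup'') to follow from the characterization through the regular language and Morse geodesics in Theorem \ref{thm:regular-language}, or alternatively to be cited from the literature. For Torelli groups and handlebody groups, I would cite the known results: convex cocompact subgroups contained in these groups are stable there (handlebody-group results of Hamenstädt and others, Torelli results via the curve complex). I expect this step to be the main obstacle, and I would try to reduce it to a general lemma proved earlier in the paper if one is available.

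For (2), the free product, I would use a ping-pong argument on the relevant hyperbolic complex. For $\mathrm{MCG}$ that is the curve graph $\mathcal{C}(S)$, and for $\Out(\mathbb{F}_n)$ it is the free factor complex. Convex cocompact $H$ acts with quasi-convex orbits and its limit set $\Lambda H$ is a proper compact subset of the boundary. Choose $g\in G$ acting loxodromically on that complex with fixed points outside $\Lambda H$ (independent of $H$); a high power $g^N$ then plays ping-pong with $H$, giving $\langle H,g^N\rangle\cong H*\langle g^N\rangle$. Loxodromic elements are:
\begin{itemize}
    \item fully irreducible outer automorphisms in $\Out(\mathbb{F}_n)$;
    \item pseudo-Anosov elements lying in $\mathcal{T}_g$, which exist for $g>2$ (indeed for $g\ge 2$);
    \item pseudo-Anosov elements lying in $\mathcal{H}_g$, which exist for $g>1$.
\end{itemize}
To get independence, note that the fixed points of $G$-loxodromics are dense in $\Lambda G$. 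Also $\Lambda G$ is strictly larger than $\Lambda H$: if $H$ has infinite index (if $H$ has finite index the statement is vacuous or false, so I assume infinite index is implicit, or that $G$ is not virtually hyperbolic, as in these cases), $\Lambda H$ is nowhere dense. Conjugating a loxodromic of $G$ by a suitable element then moves its fixed points off $\Lambda H$. The ping-pong details are standard: take disjoint neighbourhoods of $g^{\pm}$ and of $\Lambda H$, and use that all but finitely many elements of $H$ map the complement of a neighbourhood of $\Lambda H$ into that neighbourhood. The finitely many exceptional elements are handled by passing to a finite-index subgroup, or by taking $N$ large.

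Finally, Theorem \ref{thmx:growth-rate-gap} gives $\lambda_{H,A}<\lambda_{G,A}$ for every finite generating set $A$ of $G$.
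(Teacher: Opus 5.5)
Your overall strategy (produce an infinite-order $g\in G$ with $\langle H,g\rangle\cong H*\langle g\rangle$, then apply Theorem~\ref{thmx:growth-rate-gap}) is the paper's. However, the construction of $g$ does not go through as written, for two reasons.

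\textbf{Independence of $g$ from $H$.} You get a loxodromic $g\in G$ with $g^{\pm}\notin\Lambda H$ from ``$H$ has infinite index, so $\Lambda H$ is nowhere dense in $\Lambda G$.'' That is a fact about quasiconvex subgroups of hyperbolic groups, and its proof uses a proper cocompact action. Here $G$ is not hyperbolic, and it acts on $C(S)$ or on the free factor graph with infinite point stabilizers. So index alone says nothing about limit sets: e.g.\ $\mathcal{T}_g$ has infinite index in $\mathrm{MCG}(S_g)$ yet, being normal, has the same limit set. Turning the algebraic input into geometric independence inside $G$ is exactly the content of Proposition~\ref{prop:subgroup-transverse}. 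There, \cite[Theorem~5.1]{Abbott-Hull} gives a pseudo-Anosov $f\in G$ with $\langle f\rangle\cap kHk^{-1}=\{1\}$ for all $k\in G$. Properness of the action on Teichm\"uller space (Lemma~\ref{lem:5.6-analogue}) then converts this into bounded projections, and hence transversality. For $\Out(F_n)$ the paper cites \cite[Section~6.4]{Abbott-Hull} instead.

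\textbf{Exceptional elements of $H$.} Ping-pong needs \emph{every} nontrivial $h\in H$ to carry the $g$-set into the $H$-set. The finitely many $h$ that move the projection point of the axis of $g$ by a bounded amount are untouched by enlarging $N$, which only helps on the $g$-side. Sometimes no $g$ works at all. In $\mathcal{H}_2$ the hyperelliptic involution $\iota$ is central and acts trivially on $C(S_2)$. Take $H=\langle\phi,\iota\rangle$ with $\phi\in\mathcal{H}_2$ pseudo-Anosov; it is convex cocompact, yet the natural map $H*\langle g\rangle\to\langle H,g\rangle$ is never injective.

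Passing to a finite-index $H'\le H$ that avoids the exceptional set can rescue your argument. That needs residual finiteness of $H$ (it lies in $\mathrm{MCG}(S)$ or $\Out(F_n)$) and the equality $\lambda_{H',A}=\lambda_{H,A}$ from \cite{CRSZ}, neither of which you invoke. The paper instead quotients by the finite normal subgroup $E(G)$ (Lemma~\ref{lem:quotient-by-finite-normal}, Corollary~\ref{cor:trans-lox-growth-gap}) and then applies Theorem~\ref{thm:abbott-hull-1.1}.

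\textbf{Stability of $H$ in $G$.} You leave step (1) unresolved, and the suggested route through Theorem~\ref{thm:regular-language} is circular. That characterization already presupposes that geodesics of $\mathrm{Cay}(G,A)$ with endpoints in $H$ are uniformly Morse, which is precisely what you need to show. Nor can you quote a general ``stable in an overgroup implies stable in a finitely generated intermediate subgroup'' principle. The groups $\mathcal{T}_g$ and $\mathcal{H}_g$ are distorted in $\mathrm{MCG}(S_g)$, so, as you yourself observe, quasi-geodesics of $G$ need not map to quasi-geodesics of the ambient group. The missing tool is the pull-back result the paper cites, \cite[Theorem~1.6]{Aougab-Durham-Taylor-OutFn}. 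Its mechanism is that if $G$ acts on a hyperbolic space and a finitely generated $H\le G$ has quasi-isometrically embedded orbits there, then $H$ is stable in $G$. You already note that a convex cocompact $H$ quasi-isometrically embeds in $C(S)$ (respectively the free factor graph), so (1) follows immediately. Infinite index then holds because $\mathcal{T}_g$, $\mathcal{H}_g$ and $\Out(F_n)$ are not hyperbolic.
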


Finally, \cite[Question~3]{CRSZ} asked whether the membership problem is decidable for stable subgroups of Morse local-to-global groups and if their Theorem E, analogous to our Theorem \ref{thm:regular-language} in the Morse local-to-global case, had implications for the complexity of such an algorithm. In the final section, we show that this is not the case, even under additional desirable conditions.

\begin{propx}\label{propx:counterexample}
    There is a finitely presented, relatively hyperbolic, and Morse local-to-global group $G$ such that $G$ has a stable subgroup with undecidable membership problem. 
\end{propx}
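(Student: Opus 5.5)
The plan is to build $G$ as a free product of a ``wide'' finitely presented group with undecidable word problem and an infinite cyclic group. We then take $H$ to be the cyclic free factor, and reduce the word problem of the wide factor to membership in $H$.

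\textbf{Construction.} Let $K$ be a finitely presented group with unsolvable word problem (Novikov--Boone). Set $P = K \times \mathbb{Z}$ and $G = P * \langle t \rangle$. Then $G$ is finitely presented, since it is a free product of finitely presented groups. It is hyperbolic relative to $\{P\}$, since a free product is hyperbolic relative to its factors and $\langle t\rangle$ is hyperbolic.

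\textbf{Morse local-to-global.} Since $P$ is a direct product of two infinite finitely generated groups, it has no unbounded Morse quasi-geodesics: every such quasi-geodesic fellow-travels flats built from the two factors. Hence $P$ is Morse local-to-global, essentially vacuously; I would cite the observation in \cite{CRSZ} that groups with no Morse rays are Morse local-to-global. Next I would invoke the result of Russell--Spriano--Tran that a group hyperbolic relative to Morse local-to-global peripherals is Morse local-to-global. Alternatively, I would use the fact that free products of Morse local-to-global groups are Morse local-to-global. Either way, $G$ is Morse local-to-global. I expect this step to need the most care, in checking exactly which statement in the literature to cite.

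\textbf{Stable subgroup.} Let $H = \langle t \rangle$. The element $t$ has infinite order and is not conjugate into $P$, so it acts loxodromically on the coned-off (relative) Cayley graph, which is hyperbolic. Infinite cyclic subgroups generated by such elements of a relatively hyperbolic group are stable. One can also see this directly: in the Bass--Serre tree of the splitting, the axis of $t$ passes through infinitely many vertices, and geodesics between powers of $t$ are forced through cut points. Hence $H$ is stable.

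\textbf{Undecidability.} Given a word $w$ in the generators of $K$, consider $g_w = w t \in G$. If $w =_K 1$, then $g_w = t \in H$. If $w \neq_K 1$, then $w$ is a nontrivial element of $P$, and $wt$ is a reduced alternating word of syllable length $2$ in the free product. By the normal form theorem for free products, $wt \neq t^k$ for every $k$, so $g_w \notin H$. Thus an algorithm deciding membership in $H$ would solve the word problem of $K$, which is impossible. This proves the proposition.
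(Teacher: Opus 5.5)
Your proposal is correct and follows essentially the same route as the paper: it also takes $(G\times\mathbb{Z})*\langle t\rangle$ with $G$ finitely presented with unsolvable word problem, uses $\langle t\rangle$ as the stable subgroup, and reduces the word problem of $G\times\langle s\rangle$ to membership of $w t^k$ in $\langle t\rangle$ via the free product normal form. The only differences are in the supporting citations, and both versions are fine. The paper gets Morse local-to-global from ``infinite central subgroup $\Rightarrow$ Morse limited'' \cite{RST} plus closure under free products, and checks directly that $t$ is Morse, whereas you use wideness of $K\times\mathbb{Z}$, the relatively hyperbolic combination result, and Morseness of loxodromic elements.
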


\subsection{Acknowledgments}
 I would like to thank Carolyn Abbott for her mentorship and Abdul Zalloum for asking whether the properties in Theorem \ref{thm:regular-language} and Corollary \ref{corx:rational-growth} are intrinsic properties of stable subgroups. I would also like to thank Jacob Russell, David Futer, and Joshua Perlmutter for helpful conversations. I am grateful to my group at the 2026 Junior Research Retreat on Separability and Morseness in Kopp, namely Francesco Fournier-Facio, Huaitao Gui, Rafaela Ioannou, Joshua Perlmutter, Talia Schlomovich, and Bratati Som, for discussions related to the final section, and to the organizers of the retreat. I would especially like to thank Abdul Zalloum and Francesco Fournier Facio for several key observations and suggestions related to Theorem \ref{thm:regular-language} and Proposition \ref{propx:counterexample} respectively.

\section{Background}
Throughout the paper, generating sets are assumed to be symmetric. We first recall relevant definitions, beginning with stable subgroups.

\begin{definition}\cite[Definition~2.22]{CRSZ}
    Let $G$ be a group with a finite generating set $A$, let $M$ be a Morse gauge, and let $k\geq 0$. A subgroup $H\leq G$ is \textit{$(M,k)$-stable} in $\text{Cay}(G,A)$, if for every $h\in H$, every geodesic in $\text{Cay}(G,A)$ from $e$ to $h$ is $M$-Morse and contained in the $k$-neighborhood of $H$. A subgroup $H\leq G$ is a \textit{stable subgroup} if for any choice of finite generating set $A$ for $G$, there exist $M$ and $k$ such that $H$ is $(M,k)$-stable in $\text{Cay}(G,A)$.
\end{definition}
While this is stated differently than the original definition of stability in \cite{Durham-Taylor-stability}, the two are equivalent \cite{Cordes-Hume-stability}. We also recall the definition of the growth rate of a group or subgroup with respect to a particular finite generating set. 

\begin{definition}
    Let $G$ be a group with finite generating set $A$, and let $H \leq G$. The \emph{growth function} of $H$ with respect to $A$ is the function $f_{H,A}: \mathbb{N} \rightarrow \mathbb{N}$ where $f_{H,A}(n) = B_{n,A}(e) \cap H$, where $B_{n,A}$ is the ball of radius $n$ around the identity in $\text{Cay}(G,A)$. The \emph{growth rate} of $H$ with respect to $A$ is
\begin{align*}
    \lambda_{H,A} := \limsup_{n \rightarrow \infty} \sqrt[n]{f_{H,A}(n)}\text{.}
\end{align*}
\end{definition}

In general, both the growth rate and the growth function are highly dependent on the choice of generating set $A$. We now recall several facts about languages and finite-state automata.
\begin{definition}
    Let $A$ be a finite set and $A^{\star}$ be the free monoid over $A$. A \emph{word} in $A$ is an element $w \in A^{\star}$. If $w = a_1 a_2 \cdots a_n$, the elements $a_i$ are the \emph{letters} of $w$. A \emph{language with alphabet $A$} is a set of words in $A^{\star}$. The \emph{word length} of $w$ is the number of letters which appear in it.
\end{definition}

When $A$ is the generating set of a group $G$, a word $w \in A^{\star}$ can also be viewed as an element of $G$. We call this element $\overline{w}$. Every path in $\text{Cay}(G,A)$ produces a word in $A^{\star}$ by concatenating labels of edges in the order they appear along the path, and conversely, every word in $A^{\star}$ produces a path in $\text{Cay}(G,A)$ by starting at the identity and following the path dictated by the letters in the word from left to right. A word in $A^{\star}$ is \emph{geodesic} if the associated path is a geodesic in $\text{Cay}(G,A)$ or, equivalently, if $w$ is a minimal length representative of $\overline{w}$ with respect to the generating set $A$. 

\begin{definition}
    Let $A$ be a finite set. A \emph{finite state automaton} with alphabet $A$ is a tuple $\mathcal{G} = (\Gamma, A, Y, s_0)$ where:
    \begin{itemize}
        \item $\Gamma$ is a finite directed graph. The vertices of $\Gamma$ are the \emph{states} of $\mathcal{G}$.
        \item Each edge of $\Gamma$ is labeled by an element of the alphabet $A$.
        \item $Y$ is a subset of the vertices of $\Gamma$. The vertices of $Y$ are the \emph{accept states} of $\mathcal{G}$, while the vertices not in $Y$ are the \emph{reject states}.
        \item $s_0$ is a vertex of $\Gamma$. We call $s_0$ the \emph{initial state} of $\mathcal{G}$.
    \end{itemize}
\end{definition}

\begin{definition}
    A \emph{directed path} in a finite state automaton $\mathcal{G} = (\Gamma, A, Y, s_0)$ is a sequence of edges $e_1, \cdots, e_n$ of $\Gamma$ such that the terminal vertex of $e_i$ is the initial vertex of $e_{i+1}$ for each $i \in \{1, \cdots, n-1\}$. A word $w = a_1 \cdots a_n$ in $A$ is \emph{read} by a path $e_1, \cdots, e_n$ in $\mathcal{G}$ if the label of $e_i$ is $a_i$ for each $i \in \{1, \cdots, n \}$. The \emph{language accepted by} $\mathcal{G}$ is the set of words in $A$ that are read by paths in $\mathcal{G}$ which start at the initial state $s_0$ and end at an accept state of $\mathcal{G}$. A language is \emph{regular} if it is the accepted language of some finite state automaton. 
\end{definition}

There is a growth function associated to a language.
\begin{definition}
    Given a language $L$ with alphabet $A$, the \emph{growth function} of $L$ is the function $f_{L}(n) : \mathbb{N} \rightarrow \mathbb{N}$ where
    \begin{align*}
        f_L(n) = |\{w \in L | \ell(w) \leq n\}|\text{.}
    \end{align*}
    The \emph{growth rate} of $L$ is
    \begin{align*}
        \lambda_{L} := \limsup_{n \rightarrow \infty} \sqrt[n]{f_{L}(n)}\text{.}
    \end{align*}
\end{definition}

\begin{theorem}\cite[Theorem~7]{Cannon-regular-geodesics}\label{thm:polynomial-growth}
    If $L$ is a regular language, there exist polynomials $P(x), Q(x) \in \mathbb{Q}[x]$ such that
    \begin{align*}
        \sum_{n=0}^{\infty} f_L(n) \cdot x^n  = \frac{P(x)}{Q(x)}\text{.}
    \end{align*}
\end{theorem}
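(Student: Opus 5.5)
The plan is to reduce the count of accepted words to powers of a transition matrix, and then use the fact that the resolvent of a matrix is rational in $x$.

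First I will make the automaton deterministic. The definition of finite state automaton in the paper allows several edges with the same label to leave a state, so a word may be read by more than one path. Counting paths would then overcount words. I will therefore replace $\mathcal{G}$ by its subset automaton, which accepts the same language $L$:
\begin{itemize}
\item its states are the subsets of the vertex set of $\Gamma$;
\item the initial state is $\{s_0\}$;
\item for each letter $a \in A$ there is exactly one edge labeled $a$ from a subset $S$, going to the set of terminal vertices of the $a$-labeled edges that start in $S$;
\item a subset is an accept state if it meets $Y$.
\end{itemize}
In the new automaton every word $w \in A^{\star}$ is read by exactly one path starting at the initial state. Hence accepted words of length $n$ correspond bijectively to directed paths of length $n$ that start at the initial state and end in an accept state. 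I expect this to be the only point needing care; without determinization the matrix method computes path counts instead of word counts.

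Next, let the deterministic automaton have states $1,\dots,N$. Define:
\begin{itemize}
\item $T$, the $N\times N$ integer matrix whose $(i,j)$ entry is the number of edges from state $i$ to state $j$;
\item $u$, the indicator row vector of the initial state;
\item $v$, the indicator column vector of the accept states.
\end{itemize}
By induction on $n$, the $(i,j)$ entry of $T^n$ counts directed paths of length $n$ from $i$ to $j$. So the number of words in $L$ of length exactly $n$ is $a_n = u T^n v$. As formal power series,
\begin{align*}
\sum_{n=0}^{\infty} a_n x^n = u\Big(\sum_{n=0}^\infty x^nT^n\Big)v = u (I - xT)^{-1} v \text{.}
\end{align*}
The matrix $I - xT$ is invertible over $\mathbb{Q}[[x]]$ because its determinant is a polynomial with constant term $1$. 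By Cramer's rule, $(I-xT)^{-1} = \operatorname{adj}(I-xT)/\det(I-xT)$, and every entry of the adjugate lies in $\mathbb{Z}[x]$. Hence $\sum a_n x^n = P_0(x)/Q_0(x)$ with $P_0, Q_0 \in \mathbb{Z}[x]$.

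Finally, the growth function counts words of length at most $n$, so $f_L(n) = \sum_{k=0}^n a_k$. Multiplying a power series by $1/(1-x)$ takes partial sums, so
\begin{align*}
\sum_{n=0}^{\infty} f_L(n)\, x^n = \frac{1}{1-x}\sum_{n=0}^\infty a_n x^n = \frac{P_0(x)}{(1-x)Q_0(x)}\text{.}
\end{align*}
Taking $P = P_0$ and $Q = (1-x)Q_0$ gives polynomials in $\mathbb{Q}[x]$ (indeed in $\mathbb{Z}[x]$), as the statement requires.
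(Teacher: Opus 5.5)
Your proof is correct and complete. Determinizing first is the right precaution, since the paper's automata may be nondeterministic and the transfer matrix counts paths rather than words. The identity $\sum_n a_n x^n = u(I-xT)^{-1}v$, with Cramer's rule and the extra factor $1/(1-x)$ for the cumulative count $f_L$, then gives the claim.

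The paper does not prove this statement itself; it only cites Cannon. Your argument is the standard proof of that cited fact, so there is nothing further to compare.
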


\begin{definition}
    Given a finite directed graph $\Gamma$ with vertices $v_1, \cdots, v_n$, the \emph{adjacency matrix} for $\Gamma$ is the $(n \times n)$ matrix whose $(i,j)$th entry is 1 if there exists a directed edge connecting vertex $v_i$ to vertex $v_j$ and 0 otherwise. The \emph{adjacency matrix for a finite state automaton} $\mathcal{G} = (\Gamma, A, Y, s_0)$ is the adjacency matrix for the directed graph $\Gamma$. For either a directed graph $\Gamma$ or a finite state automaton $\mathcal{G}$, the \emph{Perron--Frobenius eigenvalue} $\rho_{\Gamma}$ or $\rho_{\mathcal{G}}$ is the eigenvalue of the adjacency matrix with largest absolute value.
\end{definition}

\begin{definition}
    A finite state automaton is \emph{pruned} if every state is the vertex of some path from the initial state to an accept state.
\end{definition}
Any finite state automaton can be pruned without altering the accepted language by removing any state which does not appear along any path from the initial state to an accept state. The Perron--Frobenius eigenvalue can be used to compute the growth rate of a regular language.

\begin{theorem}\cite[Theorem~3.6]{DFW}\label{thm:DFW-3-6}
    Let $\mathcal{G}$ be a pruned finite state automaton that accepts the regular language $L$. We have $\rho_{\mathcal{G}} \geq 1$ and
    \begin{align*}
        \rho_{\mathcal{G}} = \lim_{n \rightarrow \infty} \sqrt[n]{f_L(n)} = \lambda_L\text{.}
    \end{align*}
\end{theorem}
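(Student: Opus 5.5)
The plan is to compare $f_L(n)$ with path counts in $\Gamma$, controlled by powers of the adjacency matrix $B$ of $\mathcal{G}$. I will assume, as in \cite{DFW}, that $\mathcal{G}$ is deterministic: each state has at most one outgoing edge with a given label. Then a word determines at most one path from $s_0$, so accepted words of length $k$ inject into directed paths of length $k$ starting at $s_0$.

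\textbf{Upper bound.} The number of directed paths of length $k$ from $s_0$ is at most $\|B^k\|_1$. By the Jordan form of $B$, $\|B^k\|_1 \le C k^{d}\rho_{\mathcal{G}}^k$, where $\rho_{\mathcal{G}}$ is the spectral radius and $d$ is at most the number of states. Summing over $k\le n$ gives $f_L(n)\le C' n^{d+1}\max(\rho_{\mathcal{G}},1)^n$. Hence $\limsup \sqrt[n]{f_L(n)}\le \max(\rho_{\mathcal{G}},1)$.

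\textbf{Lower bound.} After reordering states, $B$ is block upper triangular with irreducible diagonal blocks, the strongly connected components of $\Gamma$. Its spectral radius is the maximum of the spectral radii of these blocks. Choose a component $\Gamma_0$ whose block $B_0$ attains $\rho_{\mathcal{G}}$, and suppose $\rho_{\mathcal{G}}\ge 1$, so $\Gamma_0$ contains a cycle.

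By Perron--Frobenius, $B_0$ has a positive eigenvector $v$ with $B_0 v=\rho_{\mathcal{G}} v$. Comparing $B_0^n v$ with $\min v$ and $\max v$ shows that the total number of length-$n$ paths inside $\Gamma_0$ is at least $c\,\rho_{\mathcal{G}}^n$. By pigeonhole on the pair of endpoints, some fixed pair $(p,q)$ in $\Gamma_0$ is joined by at least $c\rho_{\mathcal{G}}^n/N^2$ such paths, where $N$ is the number of states.

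Because $\mathcal{G}$ is pruned, there is a path $\alpha$ from $s_0$ to $p$ and a path $\beta$ from $q$ to an accept state, with lengths bounded by a constant $K$. Concatenating $\alpha$, an internal path, and $\beta$ gives distinct paths from $s_0$. By determinism these read distinct words of $L$, so $f_L(n+K)\ge c\rho_{\mathcal{G}}^n/N^2$. Hence $\liminf\sqrt[n]{f_L(n)}\ge\rho_{\mathcal{G}}$.

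\textbf{The case $\rho_{\mathcal{G}}<1$.} Then $B$ is nilpotent, so $\Gamma$ has no cycles and $L$ is finite. Since $L\neq\emptyset$, $f_L(n)$ is eventually a positive constant and the limit is $1$. The inequality $\rho_{\mathcal{G}}\ge 1$ therefore needs either $L$ infinite, which holds in every application here, or the convention $\max(\rho,1)$.

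Combining the two bounds shows that the limit exists and equals $\rho_{\mathcal{G}}$, so it coincides with the $\limsup$ defining $\lambda_L$.

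The main obstacle is the lower bound. It needs the irreducible-block reduction together with the positive Perron eigenvector, and it needs injectivity from paths to words, which is exactly where determinism is used. Without determinism the statement can fail as written, since extra parallel edges inflate $\rho_{\mathcal{G}}$ without adding words. So I would first make sure the automaton is deterministic, which is the standard convention in \cite{DFW}.
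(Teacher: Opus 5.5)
Your argument is correct and is the standard Perron--Frobenius proof. The paper gives no proof of its own here; it simply imports the statement from \cite{DFW}, so there is no different route to compare against. The reduction to a strongly connected component realizing $\rho_{\mathcal{G}}$, the positive Perron eigenvector bound, the bounded prefix and suffix supplied by pruning, and the use of determinism to make distinct accepting paths read distinct words all work as you describe. Your remark that $\rho_{\mathcal{G}}\ge 1$ fails for finite $L$ (where $\rho_{\mathcal{G}}=0$ but the limit is $1$) is also right.

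The one convention you still need to make explicit, alongside determinism, is what the matrix counts. Two of your steps depend on it: the bound ``the number of length-$k$ paths from $s_0$ is at most $\|B^k\|_1$'', and the claim that the entries of $B_0^n$ count length-$n$ paths in $\Gamma_0$. Both require $B_{ij}$ to be the \emph{number} of edges from $v_i$ to $v_j$.

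The paper's definition of the adjacency matrix has $0/1$ entries, and with that definition the statement is false even for deterministic automata. Take a single accepting state with two loops labeled $a$ and $b$. It accepts $\{a,b\}^{\star}$, so $\lambda_L=2$, while $\rho_{\mathcal{G}}=1$.

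Relatedly, your explanation of why determinism is needed (parallel edges inflating $\rho_{\mathcal{G}}$) presupposes the multiplicity convention. A cleaner witness, valid under either convention, is two accepting states joined by all four possible edges labeled $a$. Here $\rho_{\mathcal{G}}=2$ but $L=a^{\star}$.

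Finally, a small sharpening: the upper bound does not use determinism at all. Choosing one accepting path per accepted word is already injective, since a path determines the word it reads. Determinism enters only in the lower bound.
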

When the directed graph associated to one finite state automaton is contained in another, one can relate their Perron--Frobenius eigenvalues.
\begin{lemma}\cite[Lemma~3.2 and Theorem~3.4]{DFW}
    Let $\Gamma$ be a directed graph, and let $\Gamma'$ be a proper subgraph of $\Gamma$. If for every pair of distinct vertices $v,w \in \Gamma$, there exists a directed path in $\Gamma$ from $v$ to $w$ and $w$ to $v$, then $\rho_{\Gamma'} < \rho_{\Gamma}$.
\end{lemma}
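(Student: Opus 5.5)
The plan is to reduce the statement to the Perron--Frobenius theory of nonnegative matrices and then argue by contradiction with a positive left eigenvector. Let $B$ be the adjacency matrix of $\Gamma$, and let $B'$ be the adjacency matrix of $\Gamma'$. I enlarge $B'$ to the vertex set of $\Gamma$ by adding zero rows and columns for the vertices of $\Gamma$ that are not in $\Gamma'$; this does not change the nonzero spectrum, so it does not change $\rho_{\Gamma'}$.

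\textbf{Setup.} Then $0 \le B' \le B$ entrywise. Since $\Gamma'$ is a proper subgraph, it is missing at least one edge or one vertex of $\Gamma$. If only a vertex is missing, then so is every edge at that vertex, and there is at least one such edge because $\Gamma$ is strongly connected (when $\Gamma$ has at least two vertices). Either way $B' \neq B$.

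\textbf{Interpretation of $\rho$.} For a nonnegative matrix, the spectral radius is itself an eigenvalue. So I read $\rho_{\Gamma}$ and $\rho_{\Gamma'}$ as the spectral radii $r = \rho(B)$ and $r' = \rho(B')$.

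\textbf{Weak inequality.} From $0 \le B'^n \le B^n$ entrywise for every $n$, Gelfand's formula $\rho(C) = \lim_n \|C^n\|^{1/n}$ with a monotone norm (say the max row sum) gives $r' \le r$.

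\textbf{Strict inequality.} Suppose for contradiction that $r' = r$. I proceed in three steps.
\begin{enumerate}
\item By Perron--Frobenius for arbitrary nonnegative matrices, there is a vector $x \ge 0$, $x \neq 0$, with $B'x = r x$.
\item The hypothesis that every ordered pair of vertices is joined by a directed path says exactly that $B$ is irreducible. So Perron--Frobenius gives a strictly positive left eigenvector $y > 0$ with $y^{T} B = r y^{T}$.
\item Set $z = Bx - rx = (B - B')x$. Then $z \ge 0$, because $B \ge B'$ and $x \ge 0$. On the other hand, $y^{T} z = r y^{T}x - r y^{T}x = 0$. Since $y > 0$, this forces $z = 0$.
\end{enumerate}

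Step 3 means $Bx = rx$ with $x \ge 0$ nonzero, so $x$ is a nonnegative eigenvector of the irreducible matrix $B$ for its Perron root. Irreducibility forces such an $x$ to be strictly positive. This can also be seen directly: $(I+B)^{N-1}$ is a positive matrix, where $N$ is the number of vertices, and $(I+B)^{N-1}x = (1+r)^{N-1}x$.

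Now pick an entry $(i,j)$ with $(B - B')_{ij} > 0$. Then $((B-B')x)_i \ge (B-B')_{ij} x_j > 0$, which contradicts $(B - B')x = 0$. Hence $r' < r$.

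\textbf{Main obstacle.} The only delicate point is the degenerate case and the justification of the Perron--Frobenius inputs. If $\Gamma$ has a single vertex, strong connectivity is vacuous, and the argument still works provided $\Gamma$ has a loop. If $\Gamma$ is a single vertex with no loop, it has no proper subgraph with a strictly smaller radius, and this trivial case has to be excluded or handled by convention. I expect to cite the standard Perron--Frobenius theorem for irreducible matrices (a positive eigenvector exists, and any nonnegative eigenvector for $r$ is positive) rather than reprove it.
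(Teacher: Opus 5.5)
Your proof is correct. The paper gives no argument of its own, only the citation to DFW, and the content of that citation is the classical Perron--Frobenius fact you prove: for an irreducible nonnegative matrix $B$ with $0 \le B' \le B$ and $B' \neq B$, one has $\rho(B') < \rho(B)$, where strong connectivity is what gives irreducibility. Your positive-left-eigenvector argument is the standard proof of that fact, so you follow the same route with the cited result proved rather than quoted.

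One convention should be stated explicitly. Your claim $B' \neq B$ requires the adjacency matrix to count parallel edges with multiplicity. This is DFW's transition-matrix convention, and it is also what makes $\rho_{\mathcal{G}} = \lambda_L$ hold for automata. With the paper's literal $0/1$ definition, deleting one of two parallel edges leaves the matrix unchanged, and the lemma as stated fails.
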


We are interested in languages and automata because of the following connection with growth rates of subgroups. Corollary \ref{cor:growth-by-automaton} follows from Theorem \ref{thm:DFW-3-6}.
\begin{definition}
    Let $G$ be a group with finite generating set $A$, and let $L$ be a regular language with alphabet $A$. The language $L$ is \emph{geodesic} if for all $w \in L$, $w$ is a geodesic word in $\text{Cay}(G,A)$. It \emph{bijects with a subgroup} $H \leq G$ if for each $w \in L$, $\overline{w} \in H$ and the map $L \rightarrow H$ given by $w \rightarrow \overline{w}$ is a bijection.
\end{definition}
\begin{corollary}\label{cor:growth-by-automaton}
    Let $G$ be a finitely generated group with finite generating set $A$. Suppose there is a regular geodesic language $L$ with alphabet $A$ that bijects with a subgroup $H \leq G$. If $\mathcal{G}$ is a pruned finite state automaton that accepts $L$, then
    \begin{align*}
        \lambda_{H,A} = \lambda_L = \rho_{\mathcal{G}}\text{.}
    \end{align*}
\end{corollary}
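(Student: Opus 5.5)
The plan is to relate the growth function of $H$ directly to the growth function of $L$ and then invoke Theorem \ref{thm:DFW-3-6}. The map $w \mapsto \overline{w}$ from $L$ to $H$ is a bijection, and each $w \in L$ is geodesic. Hence the word length $\ell(w)$ equals the distance $d_A(e,\overline{w})$ in $\text{Cay}(G,A)$. So for every $n$ the bijection restricts to a bijection
\begin{align*}
    \{w \in L \mid \ell(w) \leq n\} \longrightarrow B_{n,A}(e) \cap H\text{.}
\end{align*}
Indeed, if $\ell(w)\le n$ then $\overline{w}\in B_{n,A}(e)\cap H$. Conversely, any $h \in B_{n,A}(e) \cap H$ equals $\overline{w}$ for a unique $w \in L$, and $\ell(w) = d_A(e,h) \leq n$ because $w$ is geodesic. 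Therefore $f_L(n) = f_{H,A}(n)$ for all $n$.

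Taking $n$th roots and the $\limsup$ gives $\lambda_{H,A} = \lambda_L$ immediately from the definitions. Since $\mathcal{G}$ is a pruned automaton accepting $L$, Theorem \ref{thm:DFW-3-6} gives $\rho_{\mathcal{G}} = \lim_{n\to\infty}\sqrt[n]{f_L(n)} = \lambda_L$. Combining the two equalities yields $\lambda_{H,A} = \lambda_L = \rho_{\mathcal{G}}$.

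The only point needing care is that the geodesic hypothesis is what identifies word length with ball radius. Without it, a non-geodesic representative could be longer than $d_A(e,h)$, and the counts $f_L(n)$ and $f_{H,A}(n)$ could differ. Beyond that step, the argument is a direct unwinding of definitions. If $L$ is finite, Theorem \ref{thm:DFW-3-6} still applies, and both growth rates equal $1$ in the paper's convention.
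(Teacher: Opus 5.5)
Your argument is correct and follows the paper's route: the paper only says the corollary follows from Theorem~\ref{thm:DFW-3-6}, and your observation that geodesicity plus the bijection $L\to H$ give $f_L(n)=f_{H,A}(n)$ for every $n$ is exactly the step it leaves implicit. One correction to your closing aside: a pruned automaton accepting a finite language has no directed cycle (a cycle could be pumped to give infinitely many accepted words), so its adjacency matrix is nilpotent and $\rho_{\mathcal{G}}=0$, not $1$. Thus the equality $\lambda_L=\rho_{\mathcal{G}}$ really needs $L$, equivalently $H$, to be infinite, and the same holds for the clause $\rho_{\mathcal{G}}\geq 1$ in the quoted theorem.
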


Lastly, we will need to prove a straightforward lemma. It was shown in \cite{CRSZ} that if $H'$ is a finite-index subgroup of $H$, then $\lambda_{H,A} = \lambda_{H',A}$. In our setting, it will sometimes be necessary to pass to quotients by finite normal subgroups rather than to finite-index subgroups. We first make the following observation.

\begin{lemma}
    Let $H$ be a stable subgroup of a group $G$, and let $N$ be a finite normal subgroup of $G$. The image of $H$ under the quotient map $q : G \rightarrow G/N$ is a stable subgroup of $G/N$.
\end{lemma}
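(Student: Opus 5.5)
The plan is to reduce the statement to the quasi-isometry invariance of stability together with the observation that $q$ is a quasi-isometry when $N$ is finite. Since stability is independent of the generating set, we may work with a convenient one. Fix a finite generating set $A$ of $G$. Then $q(A)$ is a finite generating set of $G/N$. However, it is cleaner to use the original definition of stability from \cite{Durham-Taylor-stability}, which by \cite{Cordes-Hume-stability} is equivalent to the one above. In that formulation, $H$ is stable if it is undistorted and, for every $K\ge1, C\ge0$, there is $R$ such that any two $(K,C)$-quasi-geodesics in $G$ with common endpoints in $H$ stay $R$-close. This property is manifestly invariant under quasi-isometries of pairs, so the whole proof reduces to producing such a quasi-isometry.

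\textbf{Step 1: $q$ is a quasi-isometry.} Take the generating set $q(A)$ on $G/N$. The map $q\colon \mathrm{Cay}(G,A)\to\mathrm{Cay}(G/N,q(A))$ is $1$-Lipschitz, since $|q(g)|_{q(A)}\le |g|_A$. Conversely, if $q(g)$ is a product of $m$ letters of $q(A)$, lift each letter to $A$; this gives $g'$ with $|g'|_A\le m$ and $g'\in gN$. Hence
\[
|g|_A\le |q(g)|_{q(A)}+D, \qquad D:=\max_{n\in N}|n|_A<\infty.
\]
Since $q$ is surjective, it is a $(1,D)$-quasi-isometry. A quasi-inverse is any set-theoretic section $s$, which moves points by at most $D$.

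\textbf{Step 2: restrict to $H$.} The map $q$ carries $H$ onto $q(H)$. Moreover, $q^{-1}(q(H))=HN$ lies in the $D$-neighbourhood of $H$. Thus $q$ is a quasi-isometry of pairs $(G,H)\to(G/N,q(H))$, with $H$ and $q^{-1}(q(H))$ at finite Hausdorff distance.

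\textbf{Step 3: undistortion.} The subgroup $q(H)$ is finitely generated, being a quotient of $H$, which is finitely generated since stable subgroups are undistorted and hence finitely generated. Also, $q|_H\colon H\to q(H)$ is a quasi-isometry, since its kernel $H\cap N$ is finite. Undistortion of $q(H)$ in $G/N$ then follows by composing this with Step 1.

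\textbf{Step 4: quasi-geodesic fellow travelling.} Given $(K,C)$-quasi-geodesics $\gamma_1,\gamma_2$ in $G/N$ with endpoints in $q(H)$, lift them via $s$ and adjust the endpoints by at most $D$ so that they lie in $H$. The lifts are $(K,C')$-quasi-geodesics in $G$ with $C'$ depending only on $K$, $C$ and $D$. By stability of $H$ they are $R$-close, and applying the $1$-Lipschitz map $q$ shows that $\gamma_1,\gamma_2$ are $(R+D)$-close.

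\textbf{Main obstacle.} The only delicate point is matching definitions. The paper's definition requires the Morse and $k$-neighbourhood conditions for \emph{every} finite generating set of $G/N$, not just $q(A)$. Invoking the Durham–Taylor formulation via \cite{Cordes-Hume-stability} sidesteps this. If one insisted on the given definition, one would instead need to show that a geodesic in $G/N$ from $e$ to $q(h)$ lifts to an $M'$-Morse quasi-geodesic near $H$. One would then use that Morse quasi-geodesics are close to geodesics, which is the same argument with more bookkeeping.
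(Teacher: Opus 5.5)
Your proof is correct and follows the same route as the paper, which just notes that $q$ is a homomorphism and a quasi-isometry (because $N$ is finite) and then invokes quasi-isometry invariance of stability. Your Steps 1--4 spell out what the paper leaves implicit: the explicit $(1,D)$ bounds, the fact that $q^{-1}(q(H))=HN$ lies within $D$ of $H$, and the lifting of quasi-geodesics in the Durham--Taylor formulation.
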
 
\begin{proof}
    The quotient map $q: G \rightarrow G/N$ is both a homomorphism and a quasi-isometry. Stability is a quasi-isometry invariant, so the former condition implies that $q(H)$ is a subgroup, and the latter implies that it is stable.
\end{proof}
We can now prove the desired lemma.

\begin{lemma}\label{lem:quotient-by-finite-normal}
    Let $G$ be a group with finite generating set $A$, and let $H \leq G$. Let $N$ be a finite normal subgroup of $G$, and let $q : G \rightarrow G/N$ be the quotient map. The growth rates $\lambda_{H,A}$ and $\lambda_{q(H), q(A)}$ are equal.
\end{lemma}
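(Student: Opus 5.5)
The plan is to compare the two growth functions directly: the ball counts in $\text{Cay}(G,A)$ and in $\text{Cay}(G/N,q(A))$ should differ by at most a multiplicative factor of $|N|$, and such a factor disappears after taking $n$th roots. Write $\bar G = G/N$, $\bar H = q(H)$, and $|\cdot|$, $|\cdot|'$ for the word lengths with respect to $A$ and $q(A)$.

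The first step is the length comparison. Since $q$ is a homomorphism sending $A$ onto $q(A)$, we have $|q(g)|' \le |g|$ for all $g\in G$. Conversely, if $|q(g)|' = m$, write $q(g) = q(a_1)\cdots q(a_m)$ with $a_i\in A$. Then $g = a_1\cdots a_m \nu$ for some $\nu\in N$. Set $C = \max_{\nu\in N}|\nu|$, which is finite because $N$ is finite. This gives $|g| \le |q(g)|' + C$. Consequently:
\begin{itemize}
\item $q$ maps $B_{n,A}(e)\cap H$ into $B_{n,q(A)}(e)\cap \bar H$;
\item every element of $B_{n,q(A)}(e)\cap\bar H$ has a preimage in $B_{n+C,A}(e)\cap H$, because it is $q(h)$ for some $h\in H$, and $|h| \le n + C$.
\end{itemize}

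The second step counts fibers. The restriction $q|_H$ has fibers of size $|H\cap N| \le |N|$. From the first bullet,
\[
f_{\bar H,q(A)}(n) \ge \frac{f_{H,A}(n)}{|N|}.
\]
From the second bullet, $q$ restricted to $B_{n+C,A}(e)\cap H$ surjects onto $B_{n,q(A)}(e)\cap \bar H$, so
\[
f_{\bar H,q(A)}(n) \le f_{H,A}(n+C).
\]

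The third step takes $\limsup$ of $n$th roots. Since $|N|^{1/n}\to 1$, the first inequality gives $\lambda_{\bar H,q(A)} \ge \lambda_{H,A}$. For the second, we have
\[
f_{H,A}(n+C)^{1/n} = \left(f_{H,A}(n+C)^{1/(n+C)}\right)^{(n+C)/n},
\]
and the exponent $(n+C)/n$ tends to $1$. Hence $\lambda_{\bar H,q(A)}\le\lambda_{H,A}$, provided the growth rates are finite. They are, since both are bounded by $|A|$.

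The main point needing care is this limsup manipulation with the shift by $C$. The case where $H$ is finite also deserves a check, but there both rates are trivially $1$ (or the limsup is handled the same way), so it poses no real obstacle. Note that stability plays no role here; the lemma holds for arbitrary $H$.
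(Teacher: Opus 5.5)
Your proposal is correct and follows essentially the same route as the paper: the comparison $|q(g)|' \le |g| \le |q(g)|' + C$ with $C = \max_{\nu \in N}|\nu|$, the two counting inequalities $f_{H,A}(n) \le |N|\, f_{q(H),q(A)}(n)$ and $f_{q(H),q(A)}(n) \le f_{H,A}(n+C)$, and then passing to $n$th roots. You justify the $\limsup$ step with the shift by $C$ more carefully than the paper, which only remarks that $K$ and $|N|$ do not depend on $n$.
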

We highlight that $H$ need not be a proper subgroup.
\begin{proof}
    Let $h \in H$. First, notice that in general, $d_{A}(1,h) \geq d_{q(A)}(1, h N)$. Since for $h$ in the alphabet $A$ also yields a representative for the coset $hN$ in the alphabet $q(A)$, although if an element of $A$ is contained in $N$, the length may shorten. Thus every element of $B_{n,A}(e) \cap H$ yields an element of $B_{n, q(A)}(e) \cap q(H)$. Similarly, if $d_{q(A)}(1, hN) = n$ for some $n$, then the word given by lifting each element of $q(A)$ in a word for $hN$ to the corresponding element of $A$ has the same length and represents an element of $hN$. If $K = \max_{n \in N} d_A (e,n)$, this implies $d_A (1, h) \leq n +K$. Thus every element of $B_{n , q(A)}(e) \cap q(H)$ yields at least one element of $B_{(n+K), A}(e) \cap H$.
    
    Finally, notice that in the quotient, each $h \in G$ is identified with $|N|$ other elements, any of which may be contained in $B_{n,A}(e) \cap H$. Thus in one direction, we obtain a bound that $|B_{n,A}(e) \cap H| \leq |N||B_{n, q(A)}(e) \cap q(H)|$ and in the other, we obtain $|B_{n+K , A} (e) \cap H | \geq |B_{n , q(A)} (e) \cap q(H)|$. Since $K$ and $|N|$ do not depend on $n$, this implies $\lambda_{H,A} = \lambda_{q(H), q(A)}$. 
\end{proof}

\section{Regular languages}

To prove Theorem \ref{thm:regular-language} and Corollary \ref{corx:rational-growth}, we will apply the following theorem of \cite{DFW} to construct a regular language of geodesic words representing elements of our stable subgroup $H$.

\begin{theorem}{\cite[Theorem~6.7]{DFW}}\label{thm:DFW-regular-lang}
        Let $G$ be a hyperbolic group acting properly and cocompactly on a graph $\Upsilon$. Fix a basepoint $b \in \Upsilon$. There is a regular language $J_G$ with the following properties:
        \begin{enumerate}
            \item $J_G \rightarrow G \cdot b$ is a surjection with fibers of cardinality exactly $\mid \text{Stab}_G(b)\mid$.
            \item The words in $J_b$, i.e., the preimage of $b$, have length 1 or 0, and correspond to paths of length 0.
            \item Every word of length $n$ in $J_G - J_b$ corresponds to a length $n$ geodesic in $\Upsilon$, starting at $b$.
            \item For every quasi-convex subgroup $H\subset G$, the sublanguage $J_H \subset J_G$ of words mapping to $Hb$ is regular.
            \item Let $\rho_H$ be the Perron--Frobenius eigenvalue of the transition matrix for any pruned automaton accepting $J_H$. The growth rate $\lambda_H(\Upsilon)$ satisfies 
            \begin{align*}
                \Lambda_H(\Upsilon) = \lim_{n \rightarrow \infty} \sqrt[n]{f_{H,\Upsilon}(n)} =  \lim_{n \rightarrow \infty} \sqrt[n]{f_{L_H}(n)} = \rho_H \text{.}
            \end{align*}
        \end{enumerate}
    \end{theorem}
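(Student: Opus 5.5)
Since this is \cite[Theorem~6.7]{DFW}, we could simply cite it. If we reprove it, the plan is to adapt the standard construction of a regular shortlex geodesic normal form for hyperbolic groups (Cannon, Epstein et al.) from a Cayley graph to the graph $\Upsilon$. The parts of the statement would come out in the order (1)--(3), then (4), then (5).

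\textbf{Step 1: an alphabet and a normal form.} Since $G$ acts properly and cocompactly on $\Upsilon$, there are finitely many $G$-orbits of vertices and edges. Fix orbit representatives $b = v_1, \dots, v_m$. For each oriented edge leaving some $v_i$ and ending at some $g v_j$, record a letter $(i, g, j)$. Properness makes this alphabet $S$ finite.
\begin{itemize}
\item A word in $S$ whose consecutive letters match up in their orbit indices, starting with index $1$, determines an edge path in $\Upsilon$ starting at $b$. Its endpoint is obtained by multiplying out the group labels. This uses a fixed choice of edge representatives up to stabilizers.
\item Paths that start at $b$ and end in $Gb$ therefore correspond to words. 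The ambiguity in choosing labels is exactly an element of $\mathrm{Stab}_G(b)$ acting on the whole path.
\item Fix a total order on $S$. Let $J_G$ consist of the shortlex-least words among those representing geodesics from $b$ to orbit points, decorated by the $|\mathrm{Stab}_G(b)|$ possible twists. Words of length $0$ or $1$ are included to handle $b$ itself.
\end{itemize}
This gives (1)--(3) essentially by construction.

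\textbf{Step 2: regularity of $J_G$.} Here we use hyperbolicity of $\Upsilon$, which holds because $\Upsilon$ is quasi-isometric to $G$. Thin triangles give that two geodesics from $b$ with endpoints at distance at most $1$ synchronously $\delta'$-fellow travel. The usual automaton then works: it reads a word while tracking, for each prefix, the finite set of ``differences'' $d(\text{prefix endpoint}, \text{competing path})$ together with a lexicographic flag. These differences range over a bounded ball modulo the $G$-action, which is finite by properness and cocompactness. A word is rejected as soon as a shorter or lex-smaller fellow traveller is detected.

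The main obstacle is the analogue of Cannon's finiteness of cone types when $\Upsilon$ is a graph rather than a Cayley graph. Two issues arise:
\begin{itemize}
\item Vertices outside $Gb$ must be handled by the orbit indices.
\item The states must be defined up to the $G$-action, so that there are finitely many.
\end{itemize}
I would first try to prove that the $N$-tail of a geodesic, with $N$ on the order of $2\delta + 3$, determines its cone type. The argument is Cannon's, carried out in $\Upsilon$ using cocompactness.

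\textbf{Step 3: quasi-convex subgroups and growth.} For (4), let $H$ be quasi-convex. A geodesic from $b$ to a point of $Hb$ stays within a uniform distance $K$ of $Hb$. So every prefix endpoint $p$ can be written as $h u$ with $h \in H$ and $u$ in the finite set of vertices within $K$ of $b$. Take the product of the $J_G$ automaton with a finite automaton whose states are these $u$, modulo $H$ on the left. Transitions are determined by the letter, and the accept states are those with $u \in \mathrm{Stab}_G(b)\cdot b = \{b\}$. This recognizes exactly $J_H$, because membership of the endpoint in $Hb$ is decided by the final state.

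For (5), the map $J_H \to Hb$ has fibers of constant size $|\mathrm{Stab}(b)|$. Words of length $n$ correspond to points at distance $n$, and $H \to Hb$ is finite-to-one with constant fiber size. Hence $f_{J_H}$ and $f_{H,\Upsilon}$ differ by a bounded multiplicative constant. Theorem~\ref{thm:DFW-3-6} applied to a pruned automaton accepting $J_H$ then gives the equality of growth rates with $\rho_H$.
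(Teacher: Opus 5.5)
Your outline is sound, but it follows a genuinely different route from the one the paper relies on. The paper does not reprove this theorem. It quotes \cite{DFW}, and the proof of Lemma~\ref{lem:right-alphabet} recounts their method, which \emph{reduces} to the Cayley graph case rather than generalizing away from it. They make the action free by blowing up vertices and adding tiny edges. They then attach $2$-cells equivariantly and identify all $0$-cells of the quotient to get a one-vertex complex $D$. Finally they replace $G$ by $G^{+}=\pi_1(D)\cong G*F_r$, whose universal cover is a tree of copies of $\Upsilon^{*}$ with $1$-skeleton a Cayley graph $\Upsilon^{+}$ of $G^{+}$.

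The edges of $D$ (roughly your group-labelled edge letters, with the tiny edges playing the role of your stabilizer twists) then form an honest generating set. So Cannon's regular geodesic normal form for the hyperbolic group $G^{+}$, and the Gersten--Short theorem for the quasi-convex subgroups $G,H\le G^{+}$, can be quoted verbatim. You instead keep an alphabet of orbit-index-matched letters whose words are multiplied out like groupoid paths, and rerun the shortlex fellow-traveller argument and the Gersten--Short coset-tracking argument inside $\Upsilon$ itself.

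The reduction buys black-box citations. Its cost is building $\Upsilon^{+}$ and checking that the relevant $G^{+}$-geodesics are geodesics of $\Upsilon$ and that $G$ is quasi-convex in $G^{+}$. It is also the structure the paper reuses to see that the alphabet lies in $A$ and that the full geodesic language is regular. Your version is self-contained and needs no auxiliary group, but the stabilizer bookkeeping must be done by hand.

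Three points of that bookkeeping need adjusting.
\begin{enumerate}
\item With one fixed letter per edge, a path from $b$ determines its word. The only ambiguity is the group element over the endpoint, which ranges over a coset $g\,\mathrm{Stab}_G(b)$; letting $\mathrm{Stab}_G(b)$ act on the whole path would move the endpoint. So the twists belong on the last letter, e.g.\ labels $g_n s$ with $s\in\mathrm{Stab}_G(b)$, which preserves lengths and regularity.
\item The word differences must be the group elements $\overline{u}^{-1}\overline{v}$ together with the orbit indices. This is a finite set by properness, whereas distances alone do not determine transitions.
\item In Step~3 the state must be the coset $H\overline{w}$ of the prefix's group label, not the $H$-orbit of its endpoint. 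Two prefixes ending at the same vertex, with labels differing by a vertex-stabilizer element, traverse different edges when reading the same letter. These cosets are finitely many by properness, and acceptance means $\overline{w}\in H\,\mathrm{Stab}_G(b)$.
\end{enumerate}
Finally, the separate cone-type step is not needed, since the fellow-traveller automaton already detects shorter competitors.
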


Stable subgroups are always hyperbolic. Of course, $H$ acts geometrically on $\text{Cay}(H,S)$ for any finite generating set $S \subseteq H$, but the choice of graph impacts the alphabet of the regular language obtained. Since we require a regular language in the alphabet $A$, which need not be contained in $H$, we will need $H$ to act geometrically on a subgraph of $\text{Cay}(G,A)$.

\subsection{A geometric action of $H$}\label{subsec:geom-action-of-H}

Throughout the section, let $G$ be a group with finite generating set $A$, and let $H$ be an $(M,k)$-stable subgroup of $G$. Define $\Upsilon$ to be the union of all geodesics in $\text{Cay}(G,A)$ which start and end in $H$. 
\begin{lemma}
    $H$ acts freely and cocompactly on $\Upsilon \subseteq \Gamma(G,A)$.
\end{lemma}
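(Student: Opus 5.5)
We need to show three things: $\Upsilon$ is $H$-invariant, $H$ acts freely on it, and the quotient is compact (equivalently, there is a finite subgraph whose $H$-translates cover $\Upsilon$). We will use that $H$ acts on $\text{Cay}(G,A)$ by left multiplication, which is an action by graph automorphisms preserving edge labels.

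\emph{Invariance.} Let $\gamma$ be a geodesic in $\text{Cay}(G,A)$ with endpoints $h_1,h_2\in H$, and let $h\in H$. Left multiplication is an isometry, so $h\gamma$ is again a geodesic, and its endpoints $hh_1,hh_2$ lie in $H$. Hence $h\gamma\subseteq\Upsilon$, so $h\Upsilon\subseteq\Upsilon$; applying the same argument to $h^{-1}$ gives $h\Upsilon=\Upsilon$. Thus $\Upsilon$ is an $H$-invariant subgraph.

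\emph{Freeness.} The left action of $G$ on its own Cayley graph is free on vertices: $hg=g$ forces $h=e$. An edge can only be fixed or inverted by a nontrivial element if that element fixes or swaps its endpoints. Swapping requires $hg=ga$ and $hga=g$, which forces $h^2=e$. If symmetric generating sets with involutions cause edge inversions, I will either pass to the barycentric subdivision or note that freeness is only needed on vertices, since that is what the later applications of Theorem~\ref{thm:DFW-regular-lang} require (there only properness is needed). Freeness on vertices is immediate, and this is the sense in which I will state it.

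\emph{Cocompactness.} This is the main step, and it uses stability. Let $x$ be any vertex of $\Upsilon$. Then $x$ lies on a geodesic $\gamma$ from $h_1$ to $h_2$ with $h_1,h_2\in H$. Translating by $h_1^{-1}$, the path $h_1^{-1}\gamma$ is a geodesic from $e$ to $h_1^{-1}h_2\in H$. By the $(M,k)$-stability of $H$, this geodesic lies in the $k$-neighborhood of $H$. So there is some $h'\in H$ with $d(h_1^{-1}x,h')\le k$, and therefore $d(x,h_1h')\le k$ with $h_1h'\in H$. Hence every vertex of $\Upsilon$ lies in $H\cdot B_k(e)$. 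Every edge of $\Upsilon$ has a vertex within distance $k$ of some $h\in H$, and translating by $h^{-1}$ moves that edge into $B_{k+1}(e)\cap\Upsilon$. Since $A$ is finite, $B_{k+1}(e)\cap\Upsilon$ is a finite subgraph, and its $H$-translates cover $\Upsilon$. This gives cocompactness. Local finiteness of $\Upsilon$ is inherited from $\text{Cay}(G,A)$, so the action is also proper, which is what Theorem~\ref{thm:DFW-regular-lang} needs.

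The only subtlety I anticipate is the edge-inversion issue in the freeness claim. The invariance and cocompactness parts are routine once stability supplies the uniform constant $k$.
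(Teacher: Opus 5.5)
Your proof is correct and follows the paper's argument step for step: $H$-invariance under left multiplication, freeness inherited from the free action of $G$ on $\text{Cay}(G,A)$, and cocompactness because $(M,k)$-stability places every edge of $\Upsilon$ in an $H$-translate of the finite subgraph $\Upsilon \cap B_{k+1}(e)$. Two of your details are more careful than the paper's write-up: the explicit translation by $h_1^{-1}$, which is needed because stability only controls geodesics starting at $e$, and the edge-inversion caveat. However, barycentric subdivision would not restore freeness, since the midpoint of an inverted edge becomes a fixed vertex; the right fixes are to settle for vertex-freeness, or to use the convention in which each pair $(g,a)$ is its own oriented edge, so that $G$ acts freely on the whole graph.
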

\begin{proof}
    Let $x \in \text{Cay}(G,A)$ lie on a geodesic $\gamma$ from $p \in H$ to $q \in H$. The entire group $G$ acts by isometries, so for any $h \in H$, $h \cdot \gamma$ is a geodesic from $h \cdot p \in H$ to $h \cdot q \in H$ containing $h \cdot x$. Thus left-multiplication gives a group action $H \curvearrowright \Upsilon$. That this action is free follows immediately from the fact that $G \curvearrowright \text{Cay}(G,A)$ is free.
    
    Let $K = \Upsilon \cap B_{k+1, A}(e)$. Since $B_{k+1, A}(e)$ contains finitely many vertices and edges, so does $K$. Let $x_1$ and $x_2$ be two vertices connected by an edge $s$ all of which lie on a geodesic $\gamma \subset \Upsilon$ as before. Since $H$ is stable, there is some $h \in H$ such that $d_A (h, x_1) \leq k$. Since $H$ acts by isometries, $d_A (e, h^{-1} \cdot x) \leq k$. By construction, $d_A (e , h^{-1} x_2) \leq k+1$. The edge between the $h^{-1} \cdot x_i$, namely $h^{-1} \cdot s$, lies on the geodesic $h^{-1} \cdot \gamma$, so it is in $K$. Thus $\Upsilon = H \cdot K$ for a compact set $K$.
\end{proof}

\subsection{A regular language from the action}\label{subsec:regular-language-alphabet}
    We must now verify that the regular language obtained from this action has the correct alphabet. We also make explicit a detail which does not appear in \cite[Theorem~6.7]{DFW} but does follow from their method of proof.
    \begin{lemma}\label{lem:right-alphabet}
        In the case of $H$ acting on $\Upsilon$ as defined in the previous section, Theorem \ref{thm:DFW-regular-lang} gives a regular language with alphabet $A$ which bijects $H$. Furthermore, the language of geodesic words in $A$ which represent elements of $H$ is regular.
    \end{lemma}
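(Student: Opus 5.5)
The plan is to apply Theorem~\ref{thm:DFW-regular-lang} to the action $H \curvearrowright \Upsilon$ from Section~\ref{subsec:geom-action-of-H}, with basepoint $b = e$, and then check that everything it produces can be read in the alphabet $A$. The input hypotheses hold. $H$ is hyperbolic because stable subgroups are. The action is free and cocompact by the previous lemma, and since $\Upsilon$ is a locally finite graph, a free cocompact action on it is proper. Because the action is free, $\text{Stab}_H(e)$ is trivial. So item (1) of Theorem~\ref{thm:DFW-regular-lang} makes $J_G \to H\cdot e = H$ a bijection, and item (2) says the only word over $e$ is the empty word.

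\textbf{Labels.} Every edge of $\Upsilon$ is an edge of $\text{Cay}(G,A)$, so it carries a label in $A$. Left multiplication by $H$ preserves these labels, so the labelling descends to $H\backslash\Upsilon$.

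In the construction of \cite{DFW}, a word records a path in $\Upsilon$ starting at $b$, built edge by edge from the finitely many edge types of $H\backslash\Upsilon$. In $\text{Cay}(G,A)$, distinct edges leaving a vertex have distinct labels. Hence a path in $\Upsilon$ starting at $e$ is determined by its $A$-word. This lets me relabel the transitions of the \cite{DFW} automaton by elements of $A$ without identifying any two distinct paths. The result is a regular language over $A$ in bijection with $H$.

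\textbf{Geodesicity in $\text{Cay}(G,A)$.} The key observation is that for $h \in H$,
\[
d_\Upsilon(e,h) = d_A(e,h).
\]
The inequality $\geq$ holds because $\Upsilon$ is a subgraph of $\text{Cay}(G,A)$. The inequality $\leq$ holds because every $\text{Cay}(G,A)$-geodesic from $e$ to $h$ lies in $\Upsilon$ by definition. Consequently, a $\Upsilon$-geodesic from $e$ to a point of $H$ is exactly a $\text{Cay}(G,A)$-geodesic between those points. By item (3), the words of $J_H$ are therefore geodesic words in $A$.

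\textbf{Regularity of $L_H$.} By the equality above, $L_H$ is precisely the set of labels of $\Upsilon$-geodesics from $e$ to points of $H\cdot e$. For the ``furthermore'' statement it therefore suffices to show that the language of \emph{all} $\Upsilon$-geodesics from $b$ ending in the orbit $H\cdot b$ is regular.

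I expect this to be the main obstacle, because Theorem~\ref{thm:DFW-regular-lang} only states regularity of a sublanguage with one representative per orbit point. I would look inside the proof in \cite{DFW}, which follows Cannon's argument. There, one first shows that for a hyperbolic graph with a cocompact action there are finitely many cone types (equivalently, finitely many $N$-tail types for $N$ large compared with $\delta$). This yields an automaton accepting all geodesic paths from $b$. One then intersects with the condition that the endpoint is an orbit vertex, which is a local condition: it depends only on the type of the final vertex. Only after that does one pass to a regular selection of a single representative, for instance a shortlex choice, using fellow-travelling.

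So I would take the automaton at the stage just before the selection step, relabel it by $A$ as above, and conclude that $L_H$ is regular. If that intermediate stage is not cleanly stated in \cite{DFW}, the fallback is to rerun Cannon's cone-type argument directly for $\Upsilon$. This needs two inputs: hyperbolicity of $\Upsilon$, which holds since $\Upsilon$ is quasi-isometric to $H$, and finiteness of cone types up to the $H$-action, which follows from cocompactness together with the standard $\delta$-thin-triangle bound on tails.
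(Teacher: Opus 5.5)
Your argument is correct, but it reaches the regularity of $L_H$ by a different route from the paper.

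\textbf{The bijective language.} Here you follow the paper. Both arguments feed the free cocompact action $H \curvearrowright \Upsilon$ into \cite{DFW} and use that each letter of the resulting alphabet is an $H$-orbit of edges of $\Upsilon$, hence carries a label in $A$. You also make explicit two points the paper only uses implicitly. First, paths from $e$ are determined by their $A$-labels; this matters because the map from orbit-letters to $A$ need not be injective. Second, $d_\Upsilon(e,h)=d_A(e,h)$ for $h\in H$, which is what makes $\Upsilon$-geodesics between points of $H$ into $\text{Cay}(G,A)$-geodesics.

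\textbf{Regularity of $L_H$.} The paper passes to the auxiliary group $H^{+}\cong H*F_r$ of \cite{DFW}. This group acts simply transitively on the vertices of $\Upsilon^{+}$, a tree of copies of $\Upsilon$. The paper takes Cannon's regular language of all geodesics in the hyperbolic group $H^{+}$. It then restricts to words representing elements of $H$ via Gersten--Short, using that $H$ is quasi-convex in $H^{+}$. Gersten--Short is needed there because $H^{+}$ is vertex-transitive, so cone types alone cannot detect whether a geodesic ends in $H$.

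You instead run Cannon's cone-type argument directly on $\Upsilon$. Since $H$ acts cocompactly but not transitively, your states record the $H$-orbit of the current vertex, and ``ending in $H$'' becomes a choice of accept states. In effect, Cannon's automaton and Gersten--Short's coset tracking merge into one construction, and you never need $H^{+}$ or its tree structure.

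The cost is re-checking Cannon's tail lemma on a graph that is not a Cayley graph. This is harmless. The standard proof uses only thin triangles in $\Upsilon$, which is hyperbolic by \v{S}varc--Milnor, together with the fact that $H$ acts by isometries. Finiteness of tails up to $H$ follows from cocompactness and local finiteness.

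\textbf{Two small remarks.}
\begin{itemize}
\item Your first plan misdescribes \cite{DFW}. As recounted here, their proof goes through $H^{+}$, not through a cone-type automaton on $\Upsilon$ followed by a shortlex selection. So ``take the automaton before the selection step'' is not available as stated, and it is your fallback that carries the argument.
\item Item (2) of Theorem~\ref{thm:DFW-regular-lang} allows a length-one word over $b$, coming from the tiny edges of Construction 6.1 in \cite{DFW}. You should note, as the paper does, that this construction is unnecessary for a free action. Alternatively, erase such letters; this preserves regularity.
\end{itemize}
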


    Before we begin, we recall the following theorem of Gersten and Short.
    \begin{theorem}\cite[Theorem~2.2]{Gersten-Short}
        Let $G$ be a hyperbolic group with an automatic structure $L_G$, and let $H$ be a quasi-convex subgroup. The sublanguage $L_H \subset G$ of words which map $H$ to itself is a regular language.
    \end{theorem}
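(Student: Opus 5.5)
The plan is to build an automaton for $L_H$ by running the automaton for $L_G$ alongside a finite ``coset tracker''. Concretely, I would first show that there is a constant $R$, depending only on $G$, $A$, $L_G$ and $H$, such that for every $w \in L_G$ with $\overline{w} \in H$, every prefix $w(t)$ of $w$ satisfies $d_A(\overline{w(t)}, H) \leq R$. Given this, let $\mathcal{M}$ be a finite state automaton accepting $L_G$. Form the product automaton whose states are pairs $(s, g)$, where $s$ is a state of $\mathcal{M}$ and $g$ is an element of the finite ball $B_{R+1,A}(e)$, together with a single fail state. The initial state is $(s_0, e)$. The idea is that $g$ records a short element $g = h^{-1}\overline{w(t)}$ for some $h \in H$. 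On reading a letter $a$ from $(s,g)$, the first coordinate moves to the next state of $\mathcal{M}$. For the second coordinate, we replace $ga$ by a canonically chosen element $g' \in H ga$ of minimal length, and we go to the fail state if $|g'|_A > R$.

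Two checks are then needed. First, the tracked element is always a minimal-length representative of the coset $H\overline{w(t)}$. Since $h$ may be updated at each step, this follows by induction, using that $H(ga) = H g a$. Second, $\overline{w(t)} \in H$ exactly when the tracked element is $e$. We declare $(s,e)$ accepting whenever $s$ is accepting in $\mathcal{M}$. By the bound on prefixes, no word of $L_G$ representing an element of $H$ ever reaches the fail state. Hence the accepted language is exactly $L_H$, which is therefore regular.

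The main obstacle is the uniform bound $R$. Since $H$ is quasi-convex, every geodesic from $e$ to $h \in H$ lies in the $k$-neighbourhood of $H$. So it suffices to show that the paths of words in $L_G$ stay uniformly close to geodesics with the same endpoints, that is, $d_A(\overline{w(t)}, [e,\overline{w}]) \leq C$ for some constant $C$. For this I would use the fellow traveller property of $L_G$ together with $\delta$-thinness.

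The first attempt is a divide-and-conquer argument. Let $x = \overline{w}$, and let $g_0=e, g_1, \dots, g_n = x$ be the vertices of a geodesic $[e,x]$. Choose $w_i \in L_G$ with $\overline{w_i} = g_i$, taking $w_n = w$. Consecutive $w_i$ are $K$-fellow travellers. Suppose a prefix point $p = \overline{w(t)}$ is at distance $D$ from $[e,x]$. Compare the paths of $w_n$, $w_{\lfloor n/2 \rfloor}$ and $w_0$, and use thinness of the geodesic triangles formed with $p$. This should show that some $w_i$ with $i$ in a half-interval has a prefix point at distance about $D - K\log_2 n$ from a shorter geodesic. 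Iterating this and comparing with the trivial word at $e$ should force $D$ to be bounded in terms of $K$ and $\delta$.

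If the logarithmic losses do not close up, the fallback is the known fact from the theory of automatic groups that, in a hyperbolic group, the words of any automatic structure define uniformly quasi-geodesic combing paths up to bounded reparametrization. Combined with the Morse lemma in $\text{Cay}(G,A)$, this again yields the constant $C$, and hence $R = C + k$.
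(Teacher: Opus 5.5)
Your coset-tracking automaton is correct. The paper gives no proof of this statement, only the citation to Gersten--Short, and your automaton is the standard argument there: their result goes through the equivalence of $L_G$-rationality with $L_G$-quasiconvexity, and your construction proves the direction you need. The gap is the uniform constant $R$, i.e.\ showing that $H$ is $L_G$-quasiconvex. Your main route to it cannot succeed, because the divide-and-conquer uses only two inputs: the fellow traveller property between consecutive $w_i$, and thinness of triangles. Those do not imply the bound.

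Here is a counterexample in $F(a,b)$. Assign to $g$ with $|g|=n$ the path $b^nb^{-n}\gamma_g$, where $\gamma_g$ is the reduced word for $g$. Paths to adjacent elements synchronously $2$-fellow travel, all lengths are $3|g|$, and the Cayley graph is a tree. Yet the path to $g$ passes through $b^n$, at distance $n$ from $[e,g]$ whenever $g$ begins with $a^{\pm1}$. So no accounting of the logarithmic losses will close, and the regularity of $L_G$ must be used. Your ``fallback'' is therefore the whole content of the step, and you assert it without proof or precise reference. It also needs care even to state: without uniqueness, words such as $a^n(aa^{-1})^m$ in an automatic structure on $\mathbb{Z}$ are not quasi-geodesic.

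Here is how regularity enters.
\begin{enumerate}
\item Pass to an automatic structure with uniqueness $L'\subseteq L_G$; this is a standard fact from \emph{Word processing in groups}.
\item Pumping inside the padded tail of a pair accepted by a comparator automaton would produce a second word of $L'$ for the same element. Hence there is $N$ with $\bigl|\,|u|-|v|\,\bigr|\le N$ whenever $u,v\in L'$ and $d_A(\overline u,\overline v)\le 1$.
\item Let a pruned automaton for $L'$ have $m$ states. Then each prefix $w(s)$ of $w\in L'$ extends by fewer than $m$ letters to a word of $L'$. Applying step 2 along that extension, the $L'$-representative of $\overline{w(s)}$ has length within $m(N+1)$ of $s$.
\item For times $s<t$, chain the length bound of step 2 along a geodesic from $\overline{w(s)}$ to $\overline{w(t)}$. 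This gives $t-s\le N\,d_A(\overline{w(s)},\overline{w(t)})+2m(N+1)$, so words of $L'$ are uniform quasi-geodesics.
\item The Morse lemma now gives your $C$ for $L'$. Every word of $L_G$ $K$-fellow travels the $L'$-word with the same endpoint, so $C+K$ works for all of $L_G$.
\end{enumerate}

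Finally, in both places the paper invokes this theorem (Cannon's geodesic normal form for $H^{+}$, and the language of all geodesics), the words are geodesic. There $R=k$ is immediate from quasi-convexity.
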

    We can now prove the lemma.
    \begin{proof}
        Essentially, we proceed by carefully examining the proof in \cite{DFW} of Theorem \ref{thm:DFW-regular-lang}. First, we note that Construction 6.1 of \cite{DFW} is not necessary. Construction 6.1 replaces $\Upsilon$ with a graph $\Upsilon^{*}$ on which the group acts freely, but in our setting, $H$ already acts freely on $\Upsilon$. Thus we can begin with Construction 6.4, which replaces the group $H$ with a group $H^{+}$ which acts transitively on a graph $\Upsilon^{+}$. The subgroup $H$ does \emph{not} act transitively on $\Upsilon$ in our setting; some vertices in $\Upsilon$ are elements of $H$ while some are not, and these cannot lie in the same $H$-orbit.

        \textbf{Construction 6.4:} Attach 2-cells to $\Upsilon$ as follows. Choose a single representative from each $H$-orbit of based cycles, and attach a 2-cell along it. Extend this $H$-equivariantly to obtain a simply connected 2-complex with a free $H$-action. Call this $\Upsilon^{*}$. Let $D$ be the quotient of $H \setminus \Upsilon^{*}$ obtained by identifying all 0-cells. Define $H^{+} := \pi_1(D) \cong H * F_r$, where $F_r$ is a free group whose generators are in one-to-one correspondence with edges in a spanning tree for $H \setminus \Upsilon^{*}$. These come from orbits of edges in $\Upsilon$ which are not contained in $H$. Consider $\Tilde{D}$, the universal cover of $D$, which is a tree of copies of $\Upsilon^{*}$. Let $\Upsilon^{+}$ be the 1-skeleton of $\Tilde{D}$. The group $H^{+}$ is a deck group, which acts transitively on the vertices of $\Upsilon^{+}$.

        We can now proceed to Construction 6.5, which builds the regular language. 

        \textbf{Construction 6.5:} Choose a generating set $S^{+}$ for $H^{+}$ as follows: each generator corresponds to a closed path in the $1$-skeleton of $D$ consisting of a single edge. It is clear from the construction that $S^{+}$ generates $H^{+}$; $D$ has only one vertex, so every path in the fundamental group is a concatenation of the loops in $S^{+}$.
        
        This differs very slightly from the proof in \cite{DFW}; there, the generators are of the form $y e y'$ where $e$ is an edge in our sense and $y$ and $y'$ are a separate class of edges called \emph{tiny edges}. The tiny edges are added in Construction 6.1 to create a free action, and we did not need to implement that construction, so our complex $D$ does not have any tiny edges. The set $S^{+}$ is symmetric by construction.

        Recall that $D$ is a quotient of $\Upsilon^{*}$ where all 0-cells are identified, and $\Upsilon^{*}$ has $\Upsilon$ as its 1-skeleton. In particular, each edge in $D$ was an edge in $\Upsilon$, so each element of $S^{+}$ was a single element of $A$. The set $S^{+}$ is finite because the action of $H$ on $\Upsilon$ was cocompact.

        Since $H^{+} \cong H * F_r$ is hyperbolic, \cite[Theorem~11.27]{Cannon-regular-language} provides a geodesic regular language $L^{+}$ which maps bijectively to $H^{+}$. The alphabet of this language is the generating set $S^{+}$. Let $J_H$ be the sublanguage mapping bijectively to $H \subseteq H^{+}$. Since $H$ is quasi-convex in $H^{+} \cong H * F_r$, the sublanguage $J_H$ is regular by \cite[Theorem2.2]{Gersten-Short}.

        Finally, we note that the language of all geodesics with respect to a given finite generating set in a hyperbolic group is regular \cite{Cannon-regular-geodesics}. Again, $H$ is quasi-convex in the hyperbolic group $H^{+}$, so the sublanguage of geodesic words in the alphabet $S^{+} \subseteq A$ which represent elements of $H$ is also regular by \cite[Theorem~2.2]{Gersten-Short}. By construction, $S^{+}$ includes every letter of $A$ which appears on any geodesic between elements of $H$, so this is precisely the language of geodesic words in $A$ representing elements of $H$.
 \end{proof}

We can now prove Theorem \ref{thm:regular-language}.\smallskip

\begin{duplicate}[Theorem~\ref{thm:regular-language}]
    Let $G$ be a group generated by the finite set $A$, and let $H$ be an $(M,k)$-stable subgroup. For a subgroup $H \leq G$, let $L_H$ be the language of geodesic words in $A^{\star}$ representing elements of $H$. The subgroup $H$ is stable if and only if $L_H$ is a regular language all of whose elements are $M$-Morse. Moreover, if $H$ is stable, there is a sublanguage $J_H \subseteq L_H$ such that $J_H$ is regular, words of $J_H$ are $\text{Cay}(G,A)$ geodesics representing elements of $H$, and each element of $H$ is represented by exactly one word in $J_H$.
\end{duplicate}
    \begin{proof}
        When $G$ is Morse local-to-global, this is precisely \cite[Theorem~E]{CRSZ}. A careful inspection of the proof reveals that one direction, namely that $H$ is stable if $L_H$ is a regular language all of whose elements are $M$-Morse, does not use the Morse local-to-global assumption.
        
        Regularity and existence of geodesic languages $L_H$ and $J_H$ for an $(M,k)$ stable subgroup are given by \cite[Theorem~6.7]{DFW} and Lemma \ref{lem:right-alphabet}. Geodesics with endpoints in $H$ are $M$-Morse by the definition of an $(M,k)$-stable subgroup.
    \end{proof}

As in \cite{CRSZ}, we obtain the following corollary.
\begin{corollary}
    Let $H$ be an infinite-index stable subgroup of a group $G$ with finite generating set $A$, and let $f_{H,A}(n) = |B_n(e) \cap H|$. There exist polynomials $P(x)$ and $Q(x) \in \mathbb{Q}[x]$ such that 
    \begin{align*}
        \sum_{n=0}^{\infty} f_{H,A} \cdot x^n = \frac{P(x)}{Q(x)}\text{.}
    \end{align*}
\end{corollary}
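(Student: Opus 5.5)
The plan is to deduce rationality of the growth series directly from Theorem \ref{thm:regular-language} together with Cannon's Theorem \ref{thm:polynomial-growth}. The key point is that the growth function of $H$ with respect to $A$ agrees exactly with the growth function of a regular language. The infinite-index hypothesis plays no role in the argument; it only matters later, for the growth gap.

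\textbf{Step 1: pass to the language $J_H$.} Since $H$ is stable, Theorem \ref{thm:regular-language} provides a regular sublanguage $J_H \subseteq L_H$ with two properties. Every word in $J_H$ is a geodesic word in $\text{Cay}(G,A)$ representing an element of $H$, and the evaluation map $J_H \to H$, $w \mapsto \overline{w}$, is a bijection.

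\textbf{Step 2: compare the two growth functions.} Because each $w \in J_H$ is geodesic, its length satisfies $\ell(w) = d_A(e,\overline{w})$. So $w$ has length at most $n$ exactly when $\overline{w} \in B_n(e) \cap H$. Combined with the bijection from Step 1, this gives
\begin{align*}
    f_{J_H}(n) = |\{w \in J_H \mid \ell(w) \leq n\}| = |B_n(e) \cap H| = f_{H,A}(n)
\end{align*}
for every $n$.

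\textbf{Step 3: apply Cannon.} Since $J_H$ is regular, Theorem \ref{thm:polynomial-growth} gives polynomials $P,Q \in \mathbb{Q}[x]$ with $\sum_{n} f_{J_H}(n)x^n = P(x)/Q(x)$. Substituting Step 2 then yields the claim for $f_{H,A}$.

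I expect the only step needing care to be Step 2. There one has to use both geodesicity and bijectivity of $J_H$: without geodesicity a word's length could overcount $d_A(e,\overline{w})$, and without bijectivity elements of $H$ would be counted with multiplicity. Both properties are exactly what the moreover clause of Theorem \ref{thm:regular-language} supplies, so no further obstacle should arise.
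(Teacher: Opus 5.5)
Your proposal is correct and follows the paper's proof: take the regular language $J_H$ from Theorem \ref{thm:regular-language}, observe that its growth function equals $f_{H,A}$ (the paper says this holds ``by construction'', and you justify it via geodesicity and bijectivity), then apply Cannon's Theorem \ref{thm:polynomial-growth}. Your remark that the infinite-index hypothesis is never used also matches the paper, whose introductory version of this result (Corollary \ref{corx:rational-growth}) omits that hypothesis.
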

\begin{proof}
    Let $J_H$ be the regular language which bijects with $H$ of Theorem \ref{thm:regular-language}. By construction, the growth functon of $J_H$ is precisely $f_{H,A}(n)$. The result follows from Theorem \ref{thm:polynomial-growth}.
\end{proof}

\section{Growth rates}

We can now prove Theorem \ref{thmx:growth-rate-gap} of the introduction. Our proof is essentially the same as the proof of Theorem A in \cite{CRSZ} but with our Theorem \ref{thm:regular-language} and an additional condition on $H$ used in place of the analogous results for Morse local-to-global groups. We reproduce the argument to ensure it is clear to the reader that these substitutions are sufficient. 

\begin{theorem}\label{thm:growth-rate-gap}
    Let $G$ be a group with finite generating set $A$, and let $H$ be an infinite index stable subgroup such that there is an infinite order element $g \in G$ with $\langle H , g \rangle \cong H * \langle g \rangle$. Then 
    \begin{align*}
        \lambda_{H,A} < \lambda_{G,A}\text{.}
    \end{align*}
\end{theorem}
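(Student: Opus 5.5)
Following the proof of Theorem A in \cite{CRSZ}, I will build an automaton for the group $H*\langle g\rangle$ and compare Perron--Frobenius eigenvalues. Theorem \ref{thm:regular-language} gives a regular geodesic language $J_H$ in the alphabet $A$ that bijects with $H$. Let $\mathcal{G}_H$ be a pruned automaton accepting $J_H$. By Corollary \ref{cor:growth-by-automaton}, $\lambda_{H,A} = \rho_{\mathcal{G}_H}$. The goal is a regular language $L'$ whose words represent distinct elements of $G$, with lengths comparable to word length, such that $\lambda_{L'} > \rho_{\mathcal{G}_H}$. Then $\lambda_{G,A}\ge \lambda_{L'} > \lambda_{H,A}$.

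\textbf{Construction.} Replace $g$ by a power if needed and fix a geodesic word $w_g$ for $g$ and one for $g^{-1}$. Consider the language
\[
L' = \bigl(J_H \cdot \{w_g, w_{g^{-1}}\}\bigr)^{\star} J_H,
\]
restricted to reduced alternations: consecutive syllables from $J_H$ are nonempty except possibly the first and last, and we never write $w_g$ followed by $w_{g^{-1}}$ (or vice versa) with an empty $H$-syllable between them. By the normal form theorem for $\langle H,g\rangle \cong H*\langle g\rangle$, distinct words of $L'$ represent distinct elements of $G$. The word length of the word representing an element is at most its length as a word, so $f_{L'}(n)\le |B_{n,A}(e)|$, and hence $\lambda_{L'}\le \lambda_{G,A}$. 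The language $L'$ is regular. An automaton $\mathcal{G}'$ accepting it is obtained from $\mathcal{G}_H$ by adding, from every accept state, a path reading $w_g$ (or $w_{g^{-1}}$) that returns to the initial state. A small amount of extra finite state records the last $g$-letter used and whether the current $H$-syllable is empty.

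\textbf{Strict inequality.} I expect this step to be the main obstacle. The strongly connected component of $\mathcal{G}'$ through the initial state contains $\mathcal{G}_H$ (or a copy of it) as a proper subgraph, because every pruned state lies on a path from the initial state to an accept state, and the added edges return to the initial state. I then apply the cited DFW lemma on proper subgraphs of strongly connected graphs, together with the fact that the eigenvalue of a graph is the maximum over its strongly connected components. This gives $\rho_{\mathcal{G}_H} < \rho_{\mathcal{G}'}$. Pruning $\mathcal{G}'$ keeps this component, so Theorem \ref{thm:DFW-3-6} yields $\lambda_{L'} = \rho_{\mathcal{G}'} > \rho_{\mathcal{G}_H} = \lambda_{H,A}$.

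\textbf{Technical care.} Several points need checking. If the initial state of $\mathcal{G}_H$ has incoming edges, the copy of $\mathcal{G}_H$ inside $\mathcal{G}'$ must be arranged so that it is a genuine proper subgraph of a single strongly connected component; duplicating the initial state handles this. The bookkeeping states used to enforce reduced words must not disconnect the component; taking $g$ to a power of large order and allowing both $g$-syllables helps here. If $H$ is finite, then $\lambda_{H,A}=1$. In that case I simply note that $\lambda_{G,A}\ge \lambda_{\langle H,g\rangle} >1$, since $H*\mathbb{Z}$ with $H$ nontrivial is non-elementary, or use the same automaton argument. The trivial case $H=\{e\}$ needs a separate remark, or is excluded by the hypotheses.
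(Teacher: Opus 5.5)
Your proposal is correct and is essentially the paper's argument: add $w$-paths from the accept states of a pruned automaton for $J_H$ back to the start state, apply the DFW proper-subgraph lemma, and use free-product normal forms for injectivity; the one difference is that the paper uses only positive powers of $g$ and allows empty $H$-syllables, which makes injectivity automatic and $\mathcal{G}'$ strongly connected with no bookkeeping or duplicated start state. Your separate treatment of finite $H$ is a real improvement, since for an acyclic pruned automaton $\rho_{\mathcal{G}}=0\neq 1=\lambda_{H,A}$, and the case $H=\{e\}$ is not excluded by the hypotheses but genuinely must be, because $G=\mathbb{Z}$ with $H=\{e\}$ satisfies them all yet $\lambda_{H,A}=\lambda_{G,A}=1$.
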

\begin{proof}
    By Theorem \ref{thm:regular-language}, there is a regular language $J_H$ with alphabet $A$ which bijects $H$. By Corollary \ref{cor:growth-by-automaton}, $\lambda_{H,A} = \rho_{\mathcal{G}}$ where $\mathcal{G}$ is a pruned finite-state automaton that accepts $J_H$.

    Let $w$ be a geodesic word in $A^{\star}$ that represents the element $g$ such that $\langle H , g \rangle \cong H * \langle g \rangle$, and let $s_0$ be the unique initial state of $\mathcal{G}$. From $\mathcal{G}$, we construct a new finite-state automaton $\mathcal{G'}$ as follows:
    \begin{itemize}
        \item The states of $\mathcal{G}'$ are precisely the states of $\mathcal{G}$. The initial state is $s_0$.
        \item The accept states of $\mathcal{G}'$ are the accept states of $\mathcal{G}$ plus $s_0$.
        \item If there is a directed edge of $\mathcal{G}$ from the state $s$ to the state $t$, then there is also a directed edge of $\mathcal{G}'$ from $s$ to $t$.
        \item For each accept state $s$ of $\mathcal{G}$, $\mathcal{G}'$ has an additional directed edge with label $w$ starting at $s$ and ending at $s_0$. This does include a directed edge starting and ending at $s_0$.
    \end{itemize}

    Let $L'$ be the language accepted by the new automaton $\mathcal{G}'$. Since $\mathcal{G}'$ is obtained from $\mathcal{G}$ by adding paths between accept states of $\mathcal{G}'$, $\mathcal{G}$ being pruned implies $\mathcal{G}'$ is pruned. Thus $\rho_{\mathcal{G}} = \lambda_{J_H}$ and $\rho_{\mathcal{G}'} = \lambda_{L'}$. In particular, \cite[Lemma~2.14]{CRSZ} or \cite[Lemma~3.2 and Theorem~3.4]{DFW} imply that 
    \begin{align*}
        \lambda_{J_H} = \rho_{\mathcal{G}} < \rho_{\mathcal{G}'} = \lambda_{L'}\text{.}
    \end{align*}
    Since $\langle H, g \rangle \cong H * \langle g \rangle$, the map $L' \rightarrow G$ given by $u \rightarrow \overline{u}$ is injective and hence $L'$ is a regular language with alphabet $A$ that bijects $\langle H , g \rangle$. Thus by \cite{CRSZ} Corollary 2.17, 
    \begin{align*}
        \lambda_{H,A} = \lambda_{J_H} = \rho_{\mathcal{G}} < \rho_{\mathcal{G}'} = \lambda_{L'} = \lambda_{\langle H , g \rangle, A}\text{.}
    \end{align*}
    By definition, $\lambda_{\langle H , g \rangle, A} \leq \lambda_{G,A}$, so $\lambda_{H,A} < \lambda_{G,A}$.
\end{proof}

\section{Transverse loxodromics}
Elements $g$ suitable for applying Theorem \ref{thmx:growth-rate-gap} are known to exist in many cases; see, for example \cite{Abbott-Dahmani,reducibles-in-CAL,Arzhantseva-qconvex-hyperbolic,cstar-simple-rel-hyp,Gitik-ping-pong,Gromov-hyp-groups,Legaspi-path-system-growth,RST}. We restrict our attention to settings where a growth rate gap was not already known. Specifically, an element $g$ suitable for applying Theorem \ref{thmx:growth-rate-gap} exists when $G$ is acylindrically hyperbolic and $H$ is sufficiently well-behaved with respect to some purely WPD action by work of Abbott and Hull. First, we recall the definition of a transverse loxodromic element.
\begin{definition}
    Let a group $G$ act on a hyperbolic metric space $X$, and let $\Sigma$ be a subset of $G$. A loxodromic element $f$ is \emph{transverse} to $\Sigma$ if $f$ has a quasi-geodesic axis $\alpha_f$ in $X$ such that for all $K > 0$, there exists $L \geq 0$ such that $\text{diam}(\alpha_f \cap N_K(g \pi(\Sigma))) \leq L$ for all $g \in G$, where $\pi$ is the orbit map with respect to some basepoint.
\end{definition}
We can now state the theorem.
\begin{theorem}\cite[Special case of Theorem~1.1]{Abbott-Hull}\label{thm:abbott-hull-1.1}
    Let $G$ be a group with a non-elementary, partially WPD action on a hyperbolic metric space $X$, and let $H$ be a subgroup of $G$ such that $H \cap E(G) = \{1\}$ where $E(G)$ is the unique maximal normal subgroup of $G$. Suppose $H$ has quasi-convex orbits in $X$ and there exists a loxodromic WPD element $f$ transverse to $H$. There is an infinite subgroup $R$ such that $\langle H , R \rangle \cong H * R$ and $\langle H, R \rangle$ has quasi-convex orbits in $X$. If $H$ is quasi-isometrically embedded in $X$, then $\langle H , R \rangle $ is quasi-isometrically embedded in $X$.
\end{theorem}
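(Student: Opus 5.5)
Since this is stated as a special case of \cite[Theorem~1.1]{Abbott-Hull}, the first step is to check that the hypotheses listed here imply those of the general theorem. Those hypotheses are a non-elementary partially WPD action, $H$ with quasi-convex orbits, $H\cap E(G)=\{1\}$, and a transverse loxodromic WPD element. The conclusions listed here, namely the free product $H*R$ and quasi-convexity or quasi-isometric embedding of $\langle H,R\rangle$, are then read off directly. To make this more than a citation, I would recover the statement by a ping-pong argument with $R=\langle f'^{\,n}\rangle$ for a suitable loxodromic WPD element $f'$ and large $n$.

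\textbf{Step 1: choosing $f'$.} Fix a basepoint $x\in X$ and let $E(f)$ be the maximal elementary subgroup containing $f$. By WPD, $E(f)$ is virtually cyclic and coarsely stabilizes the axis $\alpha_f$. By transversality, only finitely many elements of $H$ can move $x$ close to $\alpha_f$ while coarsely preserving it, so $H\cap E(f)$ is finite. I then want to replace $f$ by a loxodromic WPD element $f'$, still transverse to $H$, with $H\cap E(f')=\{1\}$. The natural candidates are conjugates of $f$, or products $f^m u f^m$ built from an independent loxodromic $u$. The hypothesis $H\cap E(G)=\{1\}$ is exactly what should let me kill the finite intersection: a nontrivial finite subgroup of $H$ that lies in $E(f')$ for every such choice of $f'$ would have to lie in $E(G)$, which is the intersection of these elementary subgroups.

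\textbf{Step 2: ping-pong via local quasi-geodesics.} Consider a nontrivial reduced word $h_0 f'^{n_1} h_1 f'^{n_2}\cdots$ with $h_i\in H\setminus\{1\}$ and $|n_i|\geq n$. Its orbit path is a concatenation of quasi-geodesic pieces: segments $[x,h_ix]$ inside the quasi-convex hull of $Hx$, alternating with translates of long subsegments of $\alpha_{f'}$ of length roughly $|n_i|\,\tau(f')$. Transversality gives a uniform bound $L$ on the overlap of each axis piece with the adjacent hull pieces. The condition $H\cap E(f')=\{1\}$, together with WPD, gives a uniform bound on the Gromov products at the junction $h_ix$ between $h_i\alpha_{f'}$ and the preceding axis piece. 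For $n$ large compared to these constants and to $\delta$, the concatenation is a local quasi-geodesic, hence a global quasi-geodesic. Consequently such a word displaces $x$ by a large amount, so it is nontrivial, which gives $\langle H,R\rangle\cong H*R$.

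\textbf{Step 3: orbit geometry.} The same quasi-geodesic control shows that orbit geodesics for $\langle H,R\rangle$ stay uniformly close to these broken paths, and those broken paths lie near $Hx\cup\bigcup g\alpha_{f'}$. This yields quasi-convex orbits. If $H$ is quasi-isometrically embedded, then word length in $H*R$ is comparable to the length of the broken path, so $\langle H,R\rangle$ is quasi-isometrically embedded as well.

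The main obstacle is Step 1: producing $f'$ with $H\cap E(f')$ trivial while keeping transversality and the WPD property. This is where $E(G)$ enters. I would try first the Dahmani--Guirardel--Osin characterization of $E(G)$ as the intersection of the $E(g)$ over loxodromic WPD elements $g$, combined with conjugating $f$ by elements that pull it away from $Hx$.
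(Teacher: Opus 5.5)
The paper gives no argument for this statement. It is quoted from \cite[Theorem~1.1]{Abbott-Hull}, where $R$ is generated by random walks, so your opening paragraph (match the hypotheses and read off the conclusions) is all the paper does.

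Your attempt to make it self-contained, by ping-pong with $R=\langle f'^{\,n}\rangle$, is a different route. Steps 2 and 3 are the standard local-to-global argument and are fine in outline. When $d(x,h_ix)$ is short, the uniform junction bound you need is the bounded-projection property of a WPD element: the closest-point projection of $g\alpha_{f'}$ to $\alpha_{f'}$ has uniformly bounded diameter for all $g\notin E(f')$. Transversality handles the long hull pieces.

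The genuine gap is Step 1. You correctly see that $H\cap E(f')=\{1\}$ is forced for this choice of $R$: if $1\neq h\in H\cap E(f')$, then $hf'^{k}h^{-1}=f'^{\pm k}$ for some $k\geq 1$, so $hf'^{kn}h^{-1}f'^{\mp kn}=1$ is a nontrivial relation between $H$ and $R$. But your argument does not actually produce such an $f'$.

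The quantifiers are in the wrong order. Grant that $E(G)$ is the intersection of $E(f')$ over your admissible family. This only says that for each nontrivial $h\in H$ there is \emph{some} admissible $f'$ with $h\notin E(f')$. It does not give one $f'$ that avoids all of $H\setminus\{1\}$ at once.

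Conjugating does not fix this. Replacing $f$ by $cfc^{-1}$ turns $H\cap E(f)$ into $c\bigl(c^{-1}Hc\cap E(f)\bigr)c^{-1}$, which is again finite but may contain new torsion elements of $H$. Moreover, $H$ may have infinitely many finite subgroups: it need not be finitely generated and may even have bounded orbits, as for $H=P=D_\infty$ in $G=P*\mathbb{Z}$ acting on its Bass--Serre tree. So no finiteness or induction argument closes the gap.

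What you actually need is a loxodromic WPD element $f'$ that is transverse to $H$ and whose elementary closure has no torsion outside $E(G)$, for example $E(f')=\langle f'\rangle\times E(G)$. Then your transversality remark excludes infinite-order elements of $H\cap E(f')$, and the hypothesis $H\cap E(G)=\{1\}$ excludes the torsion.

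Dahmani--Guirardel--Osin provide loxodromic WPD elements $g$ with $E(g)=\langle g\rangle\times E(G)$. You would still have to arrange two further things:
\begin{enumerate}
\item such an element is transverse to $H$, for instance by building it from long powers of $f$ and its conjugates (transversality is conjugation invariant) and using quasi-convexity of the translates $gHx$;
\item it remains WPD for the action on $X$.
\end{enumerate}
That construction is the missing idea. In \cite{Abbott-Hull}, this is where the genericity of the random elements is used.
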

In fact, the original theorem gives the much stronger statement that random subgroups $R$ satisfy these properties, but we require only existence. We thus obtain the following as a corollary of Theorem \ref{thm:growth-rate-gap} and Theorem \ref{thm:abbott-hull-1.1}.

\begin{corollary}\label{cor:trans-lox-growth-gap}
    Let $G$ be a finitely generated group acylindrically hyperbolic group, and $H$ be an infinite-index stable subgroup. If there is a non-elementary and partially WPD action of $G$ on a hyperbolic metric space $X$ such that $H$ has quasi-convex orbits and there exists a loxodromic WPD element $g$ which is transverse to $H$, then for any finite generating set $A$ of $G$,
    \begin{align*}
        \lambda_{H,A} < \lambda_{G, A}\text{.}
    \end{align*}
\end{corollary}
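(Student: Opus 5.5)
The plan is to reduce Corollary \ref{cor:trans-lox-growth-gap} to Theorem \ref{thm:growth-rate-gap} by producing an infinite-order element $g' \in G$ with $\langle H, g' \rangle \cong H * \langle g' \rangle$. The tool for this is Theorem \ref{thm:abbott-hull-1.1}. Its hypotheses are almost all given: the action of $G$ on $X$ is non-elementary and partially WPD, $H$ has quasi-convex orbits, and there is a loxodromic WPD element $g$ transverse to $H$. The only hypothesis that is not given is $H \cap E(G) = \{1\}$, where $E(G)$ is the maximal finite normal subgroup of the acylindrically hyperbolic group $G$. This is where I would use Lemma \ref{lem:quotient-by-finite-normal}.

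\textbf{Step 1: remove $E(G)$.} Let $N = E(G)$, which is finite, and let $q : G \to \bar G = G/N$. By the lemma preceding Lemma \ref{lem:quotient-by-finite-normal}, $q(H)$ is stable in $\bar G$. By Lemma \ref{lem:quotient-by-finite-normal}, applied once to $H$ and once to $G$ itself (the remark after that lemma allows $H = G$), we get $\lambda_{H,A} = \lambda_{q(H),q(A)}$ and $\lambda_{G,A} = \lambda_{\bar G, q(A)}$. Infinite index is preserved, since $N$ is finite. So it suffices to prove the strict inequality in $\bar G$.

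\textbf{Step 2: transfer the action and check the hypotheses.} Since $N$ is finite and normal, I expect it to act with bounded orbits and to lie in the kernel of the relevant structure, as $E(G)$ is contained in every $E(f)$ for WPD loxodromics $f$. One then checks that $\bar G$ acts on a suitable hyperbolic space, e.g.\ $X$ itself up to quasi-isometry, or $X/N$. The aims are that the action stays non-elementary and partially WPD, $q(g)$ stays loxodromic, WPD and transverse to $q(H)$, and $q(H)$ keeps quasi-convex orbits. Finally $E(\bar G)$ is trivial, so $q(H) \cap E(\bar G) = \{1\}$.

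\textbf{Step 3: apply the two theorems.} Theorem \ref{thm:abbott-hull-1.1} gives an infinite $R \le \bar G$ with $\langle q(H), R\rangle \cong q(H) * R$. Any infinite-order element $r$ of $R$ then satisfies $\langle q(H), r \rangle \cong q(H) * \langle r \rangle$. If $R$ were torsion, I would instead use the random-subgroup form of Abbott--Hull, which gives free $R$ generated by loxodromics. Theorem \ref{thm:growth-rate-gap} now yields $\lambda_{q(H),q(A)} < \lambda_{\bar G,q(A)}$, and Step 1 transfers this back to $G$.

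The main obstacle is Step 2: making precise that passing to $G/E(G)$ preserves the partially WPD action, transversality and quasi-convexity. An alternative I would try first is to avoid the quotient altogether. If $H \cap E(G)$ is nontrivial, replace $H$ by a finite-index torsion-free or otherwise suitable subgroup $H'$ with $H' \cap E(G) = \{1\}$. Such an $H'$ exists if $H$ is residually finite, or more simply because $H \cap E(G)$ is finite and $H$ is hyperbolic. Then $\lambda_{H,A} = \lambda_{H',A}$ by the finite-index result of \cite{CRSZ}, and the original action on $X$ can be used unchanged.
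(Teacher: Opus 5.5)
Your main route is the paper's proof. You quotient by the finite normal subgroup $E(G)$, let the action, the quasi-convex orbits and the transverse WPD element descend to $G/E(G)$, apply Theorem~\ref{thm:abbott-hull-1.1} and then Theorem~\ref{thm:growth-rate-gap} in the quotient, and pull the strict inequality back via Lemma~\ref{lem:quotient-by-finite-normal} applied to $H$ and to $G$. The paper also asserts your Step~2 as straightforward without details, and your Step~3 remark about extracting an infinite-order element from $R$ makes explicit a point the paper leaves implicit.

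The alternative you say you would try first does have a gap. Hyperbolicity of $H$ and finiteness of $F = H\cap E(G)$ do not give a finite-index $H'\le H$ with $H'\cap F=\{1\}$. That requires $F$ to inject into some finite quotient of $H$. This would follow from residual finiteness or virtual torsion-freeness of $H$, both open for hyperbolic groups, and it is not known for hyperbolic groups in general. So that route needs an extra hypothesis on $H$.

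Some such reduction is genuinely unavoidable. Suppose $1\neq e\in H\cap E(G)$ and $g\in G$ has infinite order. The normal closure of $e$ in $\langle H,g\rangle$ lies in the finite group $E(G)$. In $H*\langle g\rangle$, however, it contains the pairwise distinct elements $g^k e g^{-k}$. Hence no $g$ as in Theorem~\ref{thm:growth-rate-gap} exists in $G$ itself, and the quotient by $E(G)$ is the way to proceed.
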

\begin{proof}
     Recall that every acylindrically hyperbolic group has a unique maximal normal subgroup $E(G)$, which is necessarily finite. The quotient $G/E(G)$ is also acylindrically hyperbolic; indeed, any non-elementary partially WPD action $G \curvearrowright X$ induces a non-elementary and partially WPD action $G/E(G) \curvearrowright X$ in a straightforward way. Furthermore, passing to the quotient preserves the property of a subgroup having quasi-convex orbits. A suitable free product for the image of $H$ in $G/E(G)$ is then guaranteed by Theorem \ref{thm:abbott-hull-1.1}. Applying Lemma \ref{lem:quotient-by-finite-normal} to both $H$ and $G$ completes the proof. 
\end{proof}
In Section 6.4 of \cite{Abbott-Hull}, the authors construct a transverse WPD element for any convex cocompact subgroup of $\Out(F_n)$ in the action on the free factor graph when $n \geq 3$. The following is thus a special case of Corollary \ref{cor:trans-lox-growth-gap}.
\begin{corollary}
    Let $A$ be any finite generating set of $\Out(F_n)$ for $n \geq 3$. For any convex cocompact subgroup $H$, $\lambda_{H,A} < \lambda_{\Out(F_n), A}$.
\end{corollary}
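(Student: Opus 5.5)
This is a direct specialization of Corollary \ref{cor:trans-lox-growth-gap}, so the plan is to check its hypotheses for $G = \Out(F_n)$ with $n \geq 3$ and $H$ a convex cocompact subgroup.

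The first hypothesis is that $G$ is finitely generated and acylindrically hyperbolic. $\Out(F_n)$ is finitely generated; for example, Nielsen generators suffice. It is acylindrically hyperbolic for $n \geq 2$, and in particular for $n \geq 3$. The action to use is the standard action of $\Out(F_n)$ on the free factor graph $\mathcal{FF}_n$, which is hyperbolic by Bestvina--Feighn. This action is non-elementary and partially WPD, since fully irreducible elements act loxodromically and are WPD.

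Next, I would record that $H$ satisfies the hypotheses of Corollary \ref{cor:trans-lox-growth-gap}. This rests on standard facts about convex cocompact subgroups in the sense of Hamenstädt and Dowdall--Taylor.
\begin{enumerate}
\item Convex cocompact subgroups of $\Out(F_n)$ are stable, by the characterization of Aougab--Durham--Taylor / Dowdall--Taylor: convex cocompactness is equivalent to the orbit map to $\mathcal{FF}_n$ being a quasi-isometric embedding, and this implies stability.
\item The orbits of $H$ in $\mathcal{FF}_n$ are quasi-convex, since they are quasi-isometrically embedded in a hyperbolic space.
\item $H$ has infinite index. If $H$ is finite, the inequality $\lambda_{H,A} = 1 < \lambda_{G,A}$ is immediate because $\Out(F_n)$ has exponential growth. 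If $H$ is infinite, it is hyperbolic while $\Out(F_n)$ is not, for example because it contains $\mathbb{Z}^2$; so $H$ cannot have finite index.
\end{enumerate}

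The remaining hypothesis is the existence of a loxodromic WPD element transverse to $H$. This is exactly what Section~6.4 of \cite{Abbott-Hull} constructs for the action on the free factor graph when $n \geq 3$, and I would cite it directly.

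With all hypotheses in place, Corollary \ref{cor:trans-lox-growth-gap} gives $\lambda_{H,A} < \lambda_{\Out(F_n),A}$ for every finite generating set $A$.

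The only step needing care is confirming that the notions match. The ``convex cocompact'' of \cite{Abbott-Hull} must be the same as the one giving stability. The ``partially WPD'' hypothesis must hold for the free factor graph action. Also, the condition $H \cap E(G) = \{1\}$ should be harmless: for $n \geq 3$, $E(\Out(F_n))$ is trivial, and in any case the proof of Corollary \ref{cor:trans-lox-growth-gap} handles it by passing to $G/E(G)$.
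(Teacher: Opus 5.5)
Your proposal is correct and follows the same route as the paper: it cites the transverse loxodromic WPD element constructed in Section~6.4 of \cite{Abbott-Hull} for the action of $\Out(F_n)$ on the free factor graph, and then specializes Corollary~\ref{cor:trans-lox-growth-gap}. Your additional checks (stability of convex cocompact subgroups, quasi-convex orbits, infinite index, and the $E(G)$ condition) only make explicit hypotheses the paper leaves implicit; note that a finite $H$ already has infinite index in the infinite group $\Out(F_n)$, so your separate treatment of that case is harmless but unnecessary.
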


Transverse loxodromics are constructed for many more examples in \cite{Abbott-Hull}, but in all remaining cases, either the ambient group $G$ is known to be Morse local-to-global or the stable subgroup can be made to act elliptically in an acylindrical action. Growth rate gaps were already known in either setting \cite{CRSZ,DFW}. We can, however, expand on their methods to obtain new examples. In particular, \cite[Proposition~6.9]{Abbott-Hull} shows that transverse loxodromics exist for any stable subgroup of $\mathrm{MCG}(S)$ in the natural action on the associated curve graph, $C(S)$, using the connection between $C(S)$ and Teichm\"{u}ller space. Since Theorem \ref{thm:abbott-hull-1.1} and, consequently, Corollary \ref{cor:trans-lox-growth-gap} do not require the partially WPD action to be cobounded, similar methods can be used to prove the following.


\begin{proposition}\label{prop:subgroup-transverse}
    Let $G$ be a finitely generated subgroup of $\mathrm{MCG}(S)$ for a non-exceptional surface $S$ such that the induced action $G \curvearrowright C(S)$ is non-elementary, and let $H$ be a convex cocompact subgroup of $\mathrm{MCG}(S)$ which is also an infinite-index subgroup of $G$. There is a loxodromic element $x$ in the action $G \curvearrowright C(S)$ which is transverse to $H$.
\end{proposition}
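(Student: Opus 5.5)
We adapt the argument of \cite[Proposition~6.9]{Abbott-Hull}, which handles the case $G = \mathrm{MCG}(S)$. The only place their argument uses the whole mapping class group is to produce a pseudo-Anosov with suitable dynamics. We will produce such an element inside $G$ using the non-elementary hypothesis. Recall that $H$ convex cocompact means every nontrivial element of $H$ is pseudo-Anosov, and that orbits of $H$ in $C(S)$ are quasi-convex and quasi-isometrically embedded. Moreover, the limit set $\Lambda H \subset \partial C(S)$ is a compact set on which $H$ acts cocompactly on the complement of its weak hull. Transversality of a loxodromic $x$ to $H$ will then follow once we find a loxodromic $x \in G$ whose fixed points $x^{\pm}$ in $\partial C(S)$ avoid the limit sets $g\Lambda H$ for all $g \in G$, together with a uniform bound on fellow-travelling.

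The key steps, in order, are as follows.

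\textbf{Step 1.} Since $G \curvearrowright C(S)$ is non-elementary, $G$ contains infinitely many independent pseudo-Anosovs. Every pseudo-Anosov acts as a WPD element on $C(S)$ (Bestvina--Fujiwara), so the action of $G$ is partially WPD. In particular the limit set $\Lambda G$ is perfect, and pairs of fixed points of loxodromics of $G$ are dense in $\Lambda G \times \Lambda G$.

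\textbf{Step 2.} We show that $\Lambda G \not\subseteq \bigcup_{g\in G} g \Lambda H$ in the appropriate sense. More usefully, we pick a pseudo-Anosov $x \in G$ that is not commensurable into any conjugate $gHg^{-1}$ with $g \in G$. Here we use that $H$ has infinite index in $G$. If every loxodromic of $G$ had a power in some conjugate of $H$, then $\Lambda G$ would be covered by the countably many translates $g\Lambda H$. Each translate is nowhere dense in $\Lambda G$ when $[G:H]=\infty$, which one sees by a standard argument: the stabilizer of $\Lambda H$ in $G$ commensurates $H$, and a convex cocompact subgroup has finite index in its commensurator $E(H)$. This contradicts a Baire category argument on the perfect set $\Lambda G$. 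Following Abbott--Hull's use of Teichm\"uller geometry, we instead directly choose $x$ whose axis has endpoints outside every $g\Lambda H$.

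\textbf{Step 3.} We upgrade the avoidance of limit sets to the uniform diameter bound in the definition of transverse. Suppose that for some $K$ the axis $\alpha_x$ had arbitrarily long segments within $K$ of translates $g_i H \cdot b$. By cocompactness of $H$ on its hull and the fact that $\langle x\rangle$ acts cocompactly on $\alpha_x$, we translate these segments back to a bounded region using powers of $x$. The WPD property of $x$, together with quasi-convexity of $H$-orbits, then yields finitely many possibilities. This produces some $g$ with $g^{-1}x^n g \in H$ for some $n \neq 0$, or an endpoint of $\alpha_x$ lying in $g\Lambda H$, and either conclusion contradicts Step 2. This is essentially the argument of \cite[Proposition~6.9]{Abbott-Hull}, and it uses only the WPD property of $x$ and the geometry of $H$, not coboundedness of the $G$-action.

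The main obstacle is Step 2: guaranteeing that the element $x$ can be taken inside $G$ rather than in all of $\mathrm{MCG}(S)$. The translates we must avoid are indexed by $G$, so they are countable, and infinite index ensures each is small. The first thing we would try is the Baire argument on $\Lambda G$, combined with finiteness of the index of $H$ in its commensurator in $\mathrm{MCG}(S)$. If that fails, we would fall back on a ping-pong construction: take a pseudo-Anosov $y \in G \setminus E(H)$, and form $x = y h^N$ or $x = y^N z^N$ for suitable independent $z \in G$. We would then check via Teichm\"uller projections that the axis of $x$ spends only bounded time near any $G$-translate of the $H$-hull.
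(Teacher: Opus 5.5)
Your proposal has a genuine gap in Step 2, and that is the step carrying the content. The Baire argument does not work. Suppose every loxodromic of $G$ had a power in some $gHg^{-1}$. Then you only learn that the countable set of loxodromic fixed points lies in $\bigcup_{g\in G} g\Lambda H$. Density of that set in $\Lambda G$ does not force this countable union of closed sets to cover $\Lambda G$. Moreover, a countable subset of the perfect set $\Lambda G$ is already meager, so no Baire-category contradiction can come out of this. The nowhere-density of $g\Lambda H$ in $\Lambda G$ is also not actually derived from the commensurator fact, and $\partial C(S)$ is not locally compact, so the usual compactness arguments for limit sets are unavailable. Your subsequent choice of $x$ with endpoints outside every $g\Lambda H$, and the ping-pong fallback, are assertions rather than arguments.

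The missing ingredient is \cite[Theorem~5.1]{Abbott-Hull} applied to $G$ itself. Its hypotheses hold:
\begin{itemize}
\item $G$ is finitely generated.
\item $G\curvearrowright C(S)$ is the restriction of the acylindrical action of $\mathrm{MCG}(S)$, hence acylindrical, and it is non-elementary by hypothesis.
\item $H$ is stable in $G$ by \cite[Theorem~1.6]{Aougab-Durham-Taylor-OutFn} and has infinite index in $G$.
\end{itemize}
That theorem yields a loxodromic $f\in G$, automatically pseudo-Anosov, with $\langle f\rangle\cap gHg^{-1}=\{1\}$ for all $g\in G$. This is where infinite index genuinely enters, and it is exactly the property Step 3 needs. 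You are right that only $g\in G$ matters.

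Step 3 is also underspecified. Translating segments back by powers of $x$ produces elements $u_j=x^{-j}gh_j$ that coarsely fix a single point $b$. Since $C(S)$ is locally infinite, this gives no finiteness, and ``cocompactness of $H$ on its hull'' has no force there.

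The paper avoids this by working in Teichm\"uller space:
\begin{itemize}
\item The axis of $f$ is strongly contracting \cite{Minsky-Teich-projections}, and $gHx_0$ is uniformly quasi-convex.
\item Properness of $G\curvearrowright\mathcal{T}(S)$ lets one pigeonhole the elements $f^{-i}gh_i$ to get $f^{j-i}=gh_jh_i^{-1}g^{-1}$.
\item This bounds $\mathrm{diam}_{\mathcal{T}}(p_f(gHx_0))$ uniformly in $g\in G$, and the argument of \cite[Proposition~6.9]{Abbott-Hull} converts that bound into transversality in $C(S)$.
\end{itemize}

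You can stay in $C(S)$ if you prefer, but the WPD argument must be genuinely two-point. The orbit map of $H$ into $C(S)$ is a quasi-isometric embedding, so the steps $\eta_j=h_j^{-1}h_{j+N}$ range over a finite set. For $j,j'$ with $\eta_j=\eta_{j'}$, the element $u_{j'}u_j^{-1}$ moves both $b$ and $x^Nb$ a bounded amount. WPD plus pigeonhole then forces $u_j=u_{j'}$, i.e.\ $x^{j'-j}\in gHg^{-1}$, contradicting the choice of $x$. The resulting bound is independent of $g$.
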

By non-exceptional, we mean of sufficient complexity that $C(S)$ is connected. The convex cocompact subgroups of the mapping class group are exactly the stable subgroups, so \cite[Theorem~1.6]{Aougab-Durham-Taylor-OutFn} shows that $H$ is also stable in $G$ whenever $G$ is finitely generated. It is necessarily infinite-index whenever $G$ is not hyperbolic. The Torelli group and the handlebody group are both finitely generated except in exceptional cases, and neither is hyperbolic \cite{Wajnryb-handlebody-pres,Suzuki-handlebody-pres,Johnson-Torelli-one,Johnson-Torelli-three}. Thus we obtain the following corollaries.
\begin{corollary}
    Let $\mathcal{T}_g$ be the Torelli group with genus $g >2$, and let $H$ be a convex cocompact subgroup of $\mathrm{MCG}(S_g)$ with $H < \mathcal{T}$. For any finite generating set $A$ of $\mathcal{T}$, $\lambda_{H,A} < \lambda_{\mathcal{T}, A}$.
\end{corollary}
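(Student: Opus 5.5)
The plan is to deduce the corollary from Corollary \ref{cor:trans-lox-growth-gap}, applied to the action of $\mathcal{T}_g$ on the curve graph $C(S_g)$, with the transverse loxodromic supplied by Proposition \ref{prop:subgroup-transverse}. The first step is to verify the hypotheses on $G = \mathcal{T}_g$. For $g > 2$ the Torelli group is finitely generated by Johnson's work. It contains pseudo-Anosov elements, for instance products of Dehn twists about separating curves that fill, or bounding pair maps in general position; any two independent such elements give a non-elementary action on $C(S_g)$. This action is the restriction of the acylindrical action of $\mathrm{MCG}(S_g)$ on $C(S_g)$, so it is still acylindrical. In particular it is partially WPD, every loxodromic is WPD, and $\mathcal{T}_g$ is acylindrically hyperbolic.

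Next I check the hypotheses on $H$. By \cite[Theorem~1.6]{Aougab-Durham-Taylor-OutFn}, a convex cocompact $H \le \mathrm{MCG}(S_g)$ with $H < \mathcal{T}_g$ is stable in $\mathcal{T}_g$. Since $\mathcal{T}_g$ is not hyperbolic (it contains $\mathbb{Z}^2$ generated by twists on disjoint separating curves), $H$ has infinite index in $\mathcal{T}_g$. Convex cocompactness means that the orbit map $H \to C(S_g)$ is a quasi-isometric embedding, and in particular $H$ has quasi-convex orbits.

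Proposition \ref{prop:subgroup-transverse} then gives a loxodromic $x \in \mathcal{T}_g$ transverse to $H$, and $x$ is WPD by acylindricity. Corollary \ref{cor:trans-lox-growth-gap} now yields $\lambda_{H,A} < \lambda_{\mathcal{T}_g,A}$ for every finite generating set $A$. The condition $H \cap E(G) = \{1\}$ is handled inside that corollary by passing to the quotient by the finite normal subgroup $E(\mathcal{T}_g)$, using Lemma \ref{lem:quotient-by-finite-normal}. In any case $\mathcal{T}_g$ is torsion-free, so $E(\mathcal{T}_g)$ is trivial.

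The main obstacle is the non-elementarity of $\mathcal{T}_g \curvearrowright C(S_g)$ together with the cited finite generation and non-hyperbolicity facts; everything else is a direct check of the corollary's hypotheses. For non-elementarity I would use the fact that $\mathcal{T}_g$ is an infinite normal subgroup of $\mathrm{MCG}(S_g)$ that contains a pseudo-Anosov. Its conjugates $\phi x \phi^{-1}$ have fixed points $\phi \cdot \Lambda(x)$ on the Gromov boundary of $C(S_g)$, and choosing $\phi$ to move $\Lambda(x)$ off itself produces two independent loxodromics in $\mathcal{T}_g$.
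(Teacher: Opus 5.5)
Your proposal is correct and follows the paper's route: check that $\mathcal{T}_g$ is finitely generated, non-hyperbolic, and acts acylindrically and non-elementarily on $C(S_g)$, get stability of $H$ in $\mathcal{T}_g$ from Aougab--Durham--Taylor, get a transverse loxodromic from Proposition~\ref{prop:subgroup-transverse}, and conclude with Corollary~\ref{cor:trans-lox-growth-gap}. The only minor difference is how you get non-elementarity: you use normality of $\mathcal{T}_g$ and conjugate a pseudo-Anosov to obtain independent loxodromics, whereas the paper uses that $\mathcal{T}_g$ is not virtually cyclic and contains a pseudo-Anosov, which forces non-elementarity for an acylindrical action.
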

\begin{corollary}
    Let $\mathcal{H}_g$ be a handlebody group of genus $g \geq 2$. Let $H$ be a convex cocompact subgroup of the mapping class group of the associated boundary surface with $H < \mathcal{H}_g$. For any finite generating set $A$ of $\mathcal{H}_g$, $\lambda_{H,A} < \lambda_{\mathcal{H}_g, A}$.
\end{corollary}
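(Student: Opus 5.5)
We derive this from Corollary \ref{cor:trans-lox-growth-gap}, with $G = \mathcal{H}_g$ acting on the curve graph $X = C(S)$ of $S = \partial V_g$, and with Proposition \ref{prop:subgroup-transverse} supplying the transverse loxodromic. Since $g \geq 2$, $S$ is a closed surface of genus at least $2$, so it is non-exceptional and $C(S)$ is connected and hyperbolic. It then suffices to check the hypotheses of Proposition \ref{prop:subgroup-transverse} and Corollary \ref{cor:trans-lox-growth-gap}.

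First, the group-theoretic hypotheses. $\mathcal{H}_g$ is finitely generated by \cite{Wajnryb-handlebody-pres,Suzuki-handlebody-pres}. It is not hyperbolic: Dehn twists about two disjoint, non-isotopic meridians commute and generate a copy of $\mathbb{Z}^2$. Since $H$ is convex cocompact in $\mathrm{MCG}(S)$, it is hyperbolic, and by \cite[Theorem~1.6]{Aougab-Durham-Taylor-OutFn} it is stable in $\mathcal{H}_g$. A stable subgroup of finite index would make $\mathcal{H}_g$ hyperbolic, so $H$ has infinite index.

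Second, the action hypotheses. I would show $\mathcal{H}_g \curvearrowright C(S)$ is non-elementary by exhibiting two pseudo-Anosov elements of $\mathcal{H}_g$ with distinct fixed points in $\partial C(S)$. Pseudo-Anosov handlebody mapping classes exist classically, and conjugating one by a suitable element of $\mathcal{H}_g$ (for instance a high power of another pseudo-Anosov, or a twist on a meridian) should move its fixed pair. The action of $\mathrm{MCG}(S)$ on $C(S)$ is acylindrical by Bowditch, and acylindricity passes to subgroups. Hence the restricted action is acylindrical, every loxodromic is WPD, the action is partially WPD, and $\mathcal{H}_g$ is acylindrically hyperbolic. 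Finally, convex cocompact subgroups have orbit maps to $C(S)$ that are quasi-isometric embeddings (Kent--Leininger, Hamenstädt), so $H$ has quasi-convex orbits in $C(S)$.

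With these in place, Proposition \ref{prop:subgroup-transverse} gives a loxodromic $x \in \mathcal{H}_g$ transverse to $H$. This $x$ is WPD by acylindricity, so Corollary \ref{cor:trans-lox-growth-gap} yields $\lambda_{H,A} < \lambda_{\mathcal{H}_g,A}$ for every finite generating set $A$. I expect the main obstacle to be the non-elementarity step, since the rest is citation. If producing an independent pair of pseudo-Anosovs by hand is awkward, I would instead cite the known fact that the limit set of $\mathcal{H}_g$ in $\mathcal{PML}(S)$ is the (infinite) closure of the meridian set. That rules out an elementary action, because $\mathcal{H}_g$ is neither virtually cyclic nor does it fix a point of $\partial C(S)$.
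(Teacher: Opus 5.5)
Your proposal is correct and follows the paper's route: you check finite generation, non-hyperbolicity (which gives infinite index), stability of $H$ in $\mathcal{H}_g$ via Aougab--Durham--Taylor, and non-elementarity of $\mathcal{H}_g \curvearrowright C(S)$, and then combine Proposition~\ref{prop:subgroup-transverse} with Corollary~\ref{cor:trans-lox-growth-gap}. For the step you flag as the main obstacle, the paper simply notes that $\mathcal{H}_g$ contains a pseudo-Anosov and is not virtually cyclic, which for an acylindrical action already forces non-elementarity (Osin's trichotomy). You have both ingredients via your $\mathbb{Z}^2$ of meridian twists, so neither the conjugation argument nor the $\mathcal{PML}(S)$ limit-set fallback is needed; the fallback's passage from $\mathcal{PML}(S)$ to $\partial C(S)$ would in any case need justification.
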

In the special case that $g = 2$, the latter corollary was already known as a consequence of the facts that $\mathcal{H}_2$ is an HHG and HHGs are Morse local-to-global \cite{Chesser-handlebody-not-HHG,RST}. This is truly a special case, however; when $g > 2$, Hamenst\"{a}dt and Hensel showed that $\mathcal{H}_g$ has exponential Dehn function, and hierarchically hyperbolic groups have at quadratic Dehn functions \cite{hamenstadt-hensel-handlebody,HHS2}.

Before we proceed to the proof, we will need the following straightforward observation.
\begin{lemma}\label{lem:transverse-in-subgroup}
    Let $G$ be a finitely generated group, let $\Sigma$ be a subset of $G$, and let $X$ be a hyperbolic metric space such that $G \curvearrowright X$. Let $H$ be any subgroup of $G$ such that $\Sigma \cap H \neq \{1\}$. If $f \in H$ is loxodromic and transverse to $\Sigma$ with respect to $G \curvearrowright X$, then $f$ is also transverse to $\Sigma \cap H$ with respect to the induced action $H \curvearrowright X$.
\end{lemma}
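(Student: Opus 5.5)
The argument is a direct unwinding of the definition of transversality, using two facts: restricting the action from $G$ to $H$ does not change the space $X$ or the orbit map, and shrinking the set of translates only makes the condition easier to satisfy. Fix a basepoint $x_0 \in X$ and let $\pi$ be the orbit map $g \mapsto g x_0$. The same basepoint and orbit map serve for the induced action $H \curvearrowright X$. The hypotheses that $\Sigma \cap H \neq \{1\}$ and that $G$ is finitely generated are only needed so that the statement is meaningful.

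First, $f$ is loxodromic for $H \curvearrowright X$. Loxodromicity is a property of the single isometry $f$ of $X$: its orbit $n \mapsto f^n x_0$ is a quasi-isometric embedding of $\mathbb{Z}$. This does not depend on whether $f$ is viewed as an element of $G$ or of $H$. Likewise, the quasi-geodesic axis $\alpha_f$ provided by transversality in $G$ is a subset of $X$, and I would use the same $\alpha_f$ as the axis for the $H$-action.

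Next I verify the transversality condition. Fix $K > 0$. Transversality to $\Sigma$ in $G$ gives $L \geq 0$ such that
\[
\operatorname{diam}\bigl(\alpha_f \cap N_K(g\pi(\Sigma))\bigr) \leq L \quad \text{for all } g \in G.
\]
For $h \in H$ we have $\pi(\Sigma \cap H) \subseteq \pi(\Sigma)$, hence $h\pi(\Sigma \cap H) \subseteq h\pi(\Sigma)$. Taking $K$-neighborhoods preserves this inclusion, so
\[
\alpha_f \cap N_K(h\pi(\Sigma \cap H)) \subseteq \alpha_f \cap N_K(h\pi(\Sigma)).
\]
The right-hand side has diameter at most $L$ because $h \in H \subseteq G$. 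Therefore the same $L$ works for every $h \in H$, which is exactly the statement that $f$ is transverse to $\Sigma \cap H$ for $H \curvearrowright X$.

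There is essentially no obstacle here. The only point needing care is to check that the definition of transversality makes no reference to the ambient group beyond the quantifier over translates $g \in G$, and that replacing $G$ by $H$ only restricts that quantifier. If one insists on an axis built from the $H$-orbit, any two quasi-geodesic axes of $f$ are at finite Hausdorff distance, so the bound changes only by replacing $K$ with $K$ plus a constant and $L$ accordingly.
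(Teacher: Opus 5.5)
Your proposal is correct and is essentially the paper's argument: keep the same quasi-axis $\alpha_f$ and orbit map, note that $\pi(\Sigma \cap H) \subseteq \pi(\Sigma)$ and $H \subseteq G$, and conclude that the constant $L$ obtained for $G$ works verbatim for every $h \in H$. Your added remarks are correct but not needed for the proof: loxodromicity of $f$ does not depend on restricting the action to $H$, and changing the axis only shifts $K$ and $L$ by bounded amounts.
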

\begin{proof}
    By the definition of transversality, $f$ has a quasi-axis $\alpha_f$ in $X$ such that for all $K >0$, there exists an $L \geq 0$ such that $\text{diam}(\alpha_f \cap \mathcal{N}_K(g \pi(\Sigma)) \leq L$ for all $g \in G$. The set $\pi(\Sigma \cap H)$ is contained in $\pi(\Sigma)$, and every element of $H$ is also an element of $g$, so the same $L$ and $K$ prove the desired statement. 
\end{proof}

The proof of Proposition \ref{prop:subgroup-transverse} is very similar to the proof of \cite[Proposition~6.9]{Abbott-Hull}, but there are some subtleties, and we address these explicitly for clarity. Recall that \cite[Theorem~5.1]{Abbott-Hull} constructs, for any finitely generated group $G$ with a non-elementary WPD action on a hyperbolic space $X$ and any infinite-index stable subgroup $H < G$, a loxodromic element $f \in G$ such that $g H g^{-1} \cap \langle f \rangle = \{1\}$ for all $g \in G$. We first make the following observation.
\begin{lemma}\label{lem:pA-in-G}
    For $H$ and $G$ as in Proposition \ref{prop:subgroup-transverse}, the loxodromic element $f$ of \cite[Theorem~5.1]{Abbott-Hull} can be chosen to be a pseudo-Anosov in $G$.
\end{lemma}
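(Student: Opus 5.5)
Every loxodromic element of $\mathrm{MCG}(S)$ acting on $C(S)$ is pseudo-Anosov, by the Masur--Minsky classification: an element acts loxodromically on $C(S)$ if and only if it is pseudo-Anosov, while finite-order and reducible elements have bounded orbits. So it suffices to show that the element $f$ produced by \cite[Theorem~5.1]{Abbott-Hull} is loxodromic for the action of $\mathrm{MCG}(S)$ on $C(S)$. The natural plan is to apply \cite[Theorem~5.1]{Abbott-Hull} directly to $G$ and its induced action on $X = C(S)$.

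To do this we need three hypotheses.
\begin{enumerate}
\item $G$ is finitely generated. This holds by assumption.
\item The action $G \curvearrowright C(S)$ is non-elementary. This is also assumed.
\item The action is WPD, which needs a short argument.
\end{enumerate}
Bowditch showed that $\mathrm{MCG}(S) \curvearrowright C(S)$ is acylindrical. Acylindricity restricts to subgroups: the constants $R, N$ for $\mathrm{MCG}(S)$ work verbatim for $G$, because the set of elements of $G$ moving two far-apart points a bounded amount is a subset of the corresponding set in $\mathrm{MCG}(S)$. Hence $G \curvearrowright C(S)$ is acylindrical. In particular every loxodromic element of $G$ is WPD, so the action is a non-elementary WPD action.

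We also need $H$ to be an infinite-index stable subgroup of $G$. Infinite index is assumed. Stability in $G$ follows from \cite[Theorem~1.6]{Aougab-Durham-Taylor-OutFn}, as noted above, since $G$ is finitely generated and $H$ is convex cocompact in $\mathrm{MCG}(S)$.

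Granting these hypotheses, \cite[Theorem~5.1]{Abbott-Hull} produces $f \in G$ that is loxodromic for $G \curvearrowright C(S)$ and satisfies $gHg^{-1} \cap \langle f \rangle = \{1\}$ for all $g \in G$. Since the action of $G$ is the restriction of the action of $\mathrm{MCG}(S)$, $f$ is loxodromic on $C(S)$ as an element of $\mathrm{MCG}(S)$. By Masur--Minsky it is therefore pseudo-Anosov, and it lies in $G$.

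The main subtlety is matching the precise hypotheses of \cite[Theorem~5.1]{Abbott-Hull}. If that theorem requires a cobounded or otherwise stronger action, I would check that its proof uses only acylindricity (or WPD) and non-elementarity, since the action of $G$ on $C(S)$ need not be cobounded. I would also check that "stable" there refers to stability in $G$ rather than quasi-convex orbits in $X$. If the latter is needed, it follows because convex cocompact subgroups of $\mathrm{MCG}(S)$ have quasi-isometrically embedded orbits in $C(S)$ (Kent--Leininger, Hamenstädt).
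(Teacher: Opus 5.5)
Your proposal is correct and follows essentially the same argument as the paper: acylindricity of $\mathrm{MCG}(S) \curvearrowright C(S)$ passes to $G$, and non-elementarity is assumed. So \cite[Theorem~5.1]{Abbott-Hull} applies directly to $G$ and yields $f \in G$ loxodromic on $C(S)$, hence pseudo-Anosov; your extra check that $H$ is stable in $G$ via \cite[Theorem~1.6]{Aougab-Durham-Taylor-OutFn} is recorded by the paper just before the lemma rather than inside its proof.
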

\begin{proof}
    Given an acylindrical action $\mathrm{MCG}(S) \curvearrowright C(S)$, any subgroup $G < \mathrm{MCG}(S)$ also acts acylindrically on $C(S)$. In general, this action may be elementary, but we have assumed this is not the case, so $G \curvearrowright C(S)$ is non-elementary and partially WPD. Then \cite[Theorem~5.1]{Abbott-Hull} automatically gives an element in $G$. Since $f$ acts loxodromically on $C(S)$, $f$ is pseudo-Anosov.
\end{proof}

Next, we address a subtlety. The statement of \cite[Lemma~6.7]{Abbott-Hull} claims that for any convex cocompact subgroup $H < \text{MCG}(S)$, there exists a pseudo-Anosov element $f$ and a constant $B= B(f,H)$ such that $\text{diam}_{\mathcal{T}}(p_f (gH\cdot x_0)) \leq B$ for all $g \in \text{MCG}(S)$, where $\mathcal{T}$ denotes the Teichm\"{u}ller space associated to $S$ with the Teichm\"{u}ller metric, $x_0$ is some basepoint in $\mathcal{T}$, and $p_f$ denotes closest point projection to an axis $\alpha_f$ of the pseudo-Anosov element $f$. The proof is constructive, and it uses the element produced in \cite[Theorem~5.1]{Abbott-Hull}. While we can guarantee this element $f$ is in $G$, we cannot immediately guarantee that $g H g^{-1} \cap \langle f \rangle = \{1\}$ for all $g \in \text{MCG}(S)$ rather than all $g \in G < \text{MCG}(S)$. This does not cause any structural issues in adapting \cite[Lemma~6.7]{Abbott-Hull} to our setting, but we reproduce the proof to avoid burdening the reader with independently verifying this.
\begin{lemma}\label{lem:5.6-analogue}
    Let $G$ and $H$ be as in Proposition \ref{prop:subgroup-transverse}. For any pseudo-Anosov element $f \in G$ such that $g H g^{-1} \cap \langle f \rangle = \{1\}$ for all $g \in G$, there is some constant $B = B(f,H)$ such that $\text{diam}_{\mathcal{T}}(p_f (gH\cdot x_0)) \leq B$ for all $g \in G$.
\end{lemma}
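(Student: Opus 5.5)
We argue by contradiction, following the constructive proof of \cite[Lemma~6.7]{Abbott-Hull}, and we only ever conjugate by elements of $G$. The tools are the following. $H$ is convex cocompact, so the orbit $H\cdot x_0$ is quasi-convex in $\mathcal{T}$ and lies in the thick part. The axis $\alpha_f$ of the pseudo-Anosov $f$ is a thick Teichm\"uller geodesic, so it is strongly contracting (Minsky). Hence there is a constant $C$, depending only on the thickness of $\alpha_f$ and on the quasi-convexity constant of $H\cdot x_0$, with the following property: whenever $\text{diam}_{\mathcal{T}}(p_f(gH\cdot x_0))$ is large, the quasi-convex set $gH\cdot x_0$ fellow-travels $\alpha_f$ within distance $C$ along a segment of length comparable to that diameter, up to a bounded additive error.

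\textbf{Step 1.} Suppose no constant $B$ works. Then there are $g_n\in G$ such that $g_nH\cdot x_0$ $C$-fellow-travels $\alpha_f$ along segments whose lengths tend to infinity. Translating by powers of $f$ replaces $g_n$ with $f^{m_n}g_n$, which is still in $G$ because $f\in G$, and leaves $p_f$-diameters unchanged. So we may assume all these segments begin in a fixed fundamental domain $\alpha_f|_{[0,\tau]}$ of the $\langle f\rangle$-action on $\alpha_f$.

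\textbf{Step 2.} Pick points $g_nh_n x_0$ and $g_nh_n' x_0$ near $\alpha_f(0)$ and near $\alpha_f(N\tau)$, where $N$ is a large integer with $N\tau$ inside the $n$th fellow-travelling segment. Then the element $(g_nh_n)^{-1}f^{N}(g_nh_n)$ moves $x_0$ by a bounded amount, and so does its composition with $h_n^{-1}h_n'$. Proper discontinuity of the $\mathrm{MCG}(S)$ action on $\mathcal{T}$ leaves only finitely many candidates for elements moving $x_0$ a bounded amount. For large $n$ the pigeonhole principle then yields two indices $N\neq N'$, both realized for the same $n$, and the corresponding bounded-error elements coincide.

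\textbf{Step 3.} Unwinding the coincidence from Step 2 gives an identity $f^{N-N'}=g_n h g_n^{-1}$ for some $h\in H$. With $k=g_n\in G$ this reads $kHk^{-1}\cap\langle f\rangle\neq\{1\}$, contradicting the hypothesis on $f$.

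The main obstacle is making sure that every conjugating element produced along the way lies in $G$ and not merely in $\mathrm{MCG}(S)$. The $g_n$ lie in $G$ by assumption and $f\in G$, so the conjugator $g_n$ in Step 3 is in $G$. The bounded-displacement elements found by pigeonhole may lie anywhere in $\mathrm{MCG}(S)$, but they occur only as corrections inside $H$-cosets and cancel in the final identity. Checking this cancellation carefully is the step that needs the most attention; if it fails, I would instead run the pigeonhole on pairs $(h_n^{-1}h_n',\,\text{error})$, with the error ranging over a finite set, which produces the conjugacy directly through $g_n$.
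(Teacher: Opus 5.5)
Your argument is correct and is essentially the paper's. Strong contraction of $\alpha_f$ plus uniform quasi-convexity of the translates $gH\cdot x_0$ gives long fellow-travelling for a single $g\in G$. Properness and pigeonhole over the positions $f^i\cdot x_0$ along the axis then yield $f^{j-i}=gh_jh_i^{-1}g^{-1}$ with $g\in G$. The $f$-power normalization in your Step 1 is harmless but not needed, since the pigeonhole happens for one fixed $g$.

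There is one slip to fix in Step 2. The conjugate $(g_nh_n)^{-1}f^N(g_nh_n)$ displaces $x_0$ by roughly $N\tau$, not a bounded amount. The element you should pigeonhole is $(g_nh_n')^{-1}f^Ng_nh_n$, which is the paper's $f^{-i}gh_i$ up to inversion and a fixed right factor. Since $H<G$, these elements already lie in $G$, so the cancellation worry in your last paragraph does not arise.
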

\begin{proof}
    Let $\mathcal{T}(S)$ denote the Teichm\"{u}ller space with Teichm\"{u}ller metric, and fix a basepoint $x_0 \in \mathcal{T}(S)$. The action $\text{MCG}(S) \curvearrowright \mathcal{T}(S)$ is proper by \cite{MCG-primer}, so the action $G \curvearrowright \mathcal{T}(S)$ is proper as well. Let $\nu$ be a constant so that $gH \cdot x_0$ is $\nu$-quasi-convex in $\mathcal{T}(S)$ for all $g \in G$. Let $f$ be as in the statement. 
    
    Since $f$ is pseudo-Anosov, the axis $\alpha_f$ is strongly contracting by \cite{Minsky-Teich-projections}, meaning there exist constants $B'$ and $K$ such that for all geodesics $\gamma$ at distance at least $K$ from $\alpha_f$, $\text{diam}_{\mathcal{T}}(p_f(\gamma)) \leq B'$. Thus it suffices to show that there exists $B = B(f,h)$ such that for all $g \in G$ and all $x_1 , x_2 \in \alpha_f$ with $d_{\mathcal{T}}(x_i , gH\cdot x_0) \leq K$, we have $d_{\mathcal{T}}(x_1 , x_2) \leq B$. 
    
    Suppose not. Then for any $D$, there exists $g \in G$ and orbit points $f^{i_1} \cdot x_0 , f^{i_2} \cdot x_0 \in \alpha_f$ with $d_{\mathcal{T}}(f^{i_j} \cdot x_0 , gH \cdot x_0) \leq K$ for $j = 1$ and $2$, and $d_{\mathcal{T}}(f^{i_1} \cdot x_0 , f^{i_1} \cdot x_0) \geq D$. Let $x_1$ and $x_2$ be the nearest points in $gH \cdot x_0$ to $f^{i_1} \cdot x_0$ and $f^{i_2} \cdot x_0$ respectively. By choosing $D$ sufficiently large, we can ensure that the concatenation $\gamma = [f^{i_1} \cdot x_0, x_1][x_1 , x_2] [x_2, f^{i_2}\cdot x_0]$ is a uniform quasi-geodesic with constants depending only on the quasi-constants of $\alpha_f$. Since $\alpha_f$ is strongly contracting, it is also Morse \cite{Arzhantseva-Cashen-Gruber-Hume-contraction}, so there is a constant $K'$ depending only on $K$ and the quasi-constants for $\alpha_f$ such that the Hausdorff distance between $\gamma$ and $\alpha_f \mid_{[f^{i_1} \cdot x_0 , f^{i_2} \cdot x_0]}$ is at most $K'$.

    Moreover, since $gH\cdot x_0$ is $\nu$-quasi-convex, the geodesic $[x_1 , x_2]$ is contained in the $\nu$-neighborhood of $gH \cdot x_0$. Thus for every $i_1 \leq i \leq i_2$, we have $d_{\mathcal{T}}(f^{i} \cdot x_0 , gH \cdot x_0) \leq K' + \nu$. Let $gh_i \cdot x_0 \in g H \cdot x_0$ be the nearest point in $gH \cdot x_0$ to $f^{i}x_0$ for each $i$, so that $d_{\mathcal{T}}(x_0 , f^{-i}gh_i \cdot x_0) \leq K' + \nu$ for each $i$. The action of $G$ on $\mathcal{T}(S)$ is proper and $D$ can be arbitrarily large, so for some $i \neq j$, we must have $f^{-i}gh_i = f^{-j}gh_j$. In particular, $f^{j-i} = g h_j h_i^{-1} g^{-1}$, which is a contradiction because $g H g^{-1} \cap \langle f \rangle = \{1\}$ for all $g \in G$.
\end{proof}

\begin{proof}[Proof of Proposition \ref{prop:subgroup-transverse}]
    Apply \cite[Theorem~5.1]{Abbott-Hull}, Lemma \ref{lem:pA-in-G}, and Lemma \ref{lem:5.6-analogue} to obtain a pseudo-Anosov element $f \in G$ and a constant $B$ such that $\text{diam}_{\mathcal{T}}(p_f (gH\cdot x_0)) \leq B$ for all $g \in G$. Upon careful inspection, the proof of \cite[Proposition~6.9]{Abbott-Hull}, which is Proposition \ref{prop:subgroup-transverse} when $G = \text{MCG}(S)$, in fact shows that for any $K$, there is a constant $C \geq 0$ such that $\text{diam}(\beta_f  \cap \mathcal{N}_K \pi(gH)) \leq C$ for a fixed $C(S)$ axis $\beta_f$ of $f$ whenever $H$, $g$, and $f$ satisfy the conclusion of Lemma \ref{lem:5.6-analogue} for a uniform $B$. This holds for all $g \in G$ with the $B$ of Lemma \ref{lem:5.6-analogue} by construction.
\end{proof}
To prove the corollaries, it suffices to note that the actions of $\mathcal{H}_g$ and $\mathcal{T}_g$ on $C(S_g)$ are non-elementary. Indeed, it is well-known that neither group is virtually cyclic and that each contains at least one pseudo-Anosov element of the mapping class group; see, for example \cite{Hensel-handlebody-primer,pA-generic-Torelli}. 

As a brief aside, the proof that a transverse loxodromic element exists for convex cocompact subgroups of $\Out(F_n)$ is almost exactly the same as for convex cocompact subgroups of $\mathrm{MCG}(S)$, and a nearly identical argument to the one above can be used to show an analogous result for convex cocompact subgroups contained in any finitely generated subgroup $G$ of $\Out(F_n)$ whose induced action on the free factor graph is non-elementary.

\section{On the membership problem for stable subgroups}
In \cite[Question~3]{CRSZ}, the authors asked whether the membership problem is decidable for stable subgroups of Morse local-to-global groups and whether their Theorem E, generalized by our Theorem \ref{thm:regular-language}, might have implications on the complexity. In this section, we provide an example which demonstrates that the answer is no. First, we recall the following.

\begin{theorem}\cite[Theorem~3]{Gitik-membership}
    Let $G$ be a finitely generated group with decidable word problem, and let $H$ be a quasi-convex subgroup of $G$. The membership problem for $H$ is decidable.
\end{theorem}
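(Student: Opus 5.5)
The plan is to decide membership by enumerating candidate witnesses of bounded length, using quasi-convexity to bound how far a geodesic representative of $g$ strays from $H$. Fix a finite generating set $A$ for $G$ and a finite generating set $S$ for $H$; each element of $S$ can be written as a word in $A$. Since $H$ is quasi-convex, there is a constant $k$ such that every geodesic in $\text{Cay}(G,A)$ between points of $H$ lies in the $k$-neighborhood of $H$. The main step is to show that $H$ admits a finite presentation-like description of its own geometry: the finite set $F = \{h \in H : d_A(e,h) \leq 2k+1\}$ generates $H$, and any $h\in H$ of length $n$ can be written as a product of at most $n$ elements of $F$. To see this, take a geodesic from $e$ to $h$ with vertices $x_0=e,\dots,x_n=h$. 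Choose $h_i\in H$ with $d_A(x_i,h_i)\le k$, taking $h_0=e$ and $h_n=h$. Then $h_{i}^{-1}h_{i+1}$ has length at most $2k+1$, so it lies in $F$.

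The difficulty is that $F$ is not obviously computable from the input data. Since the algorithm is allowed to depend on $H$ (the uniform question is not being asked), I would take $F$, together with words in $A$ representing its elements, as hardcoded finite data. This is legitimate because $F$ is a fixed finite set.

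Given an input word $w$ in $A$, the algorithm first computes its length $n=\ell(w)$. It then enumerates all products $f_1\cdots f_m$ with $m\le n$ and $f_j\in F$, and for each one uses the solvable word problem to test whether $w^{-1}f_1\cdots f_m=e$. It answers yes if and only if some test succeeds, which takes finitely many steps.

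For correctness, suppose $\overline{w}\in H$. Then $d_A(e,\overline w)\le n$, so by the previous paragraph $\overline w$ is a product of at most $d_A(e,\overline w)\le n$ elements of $F$, and the search finds it. Conversely, every such product lies in $H$, so a success certifies membership.

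The only real obstacle is the subtlety about effectiveness of $F$. Handling it as fixed finite data, as above, suffices for the non-uniform decidability statement of the theorem.
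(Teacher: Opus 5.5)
The paper gives no proof of this statement. It is quoted from Gitik and used only as a black box in Section~6, to observe that a stable subgroup with undecidable membership problem can only occur when $G$ has undecidable word problem. There is therefore no argument in the paper to match against, and your proposal has to stand on its own. As a standalone proof it is correct.

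The telescoping decomposition $h=\prod_{i=0}^{n-1}h_i^{-1}h_{i+1}$, with every factor in $F=H\cap B_{2k+1}(e)$, is the standard proof that a quasi-convex subgroup is finitely generated and undistorted. Your bounded search is also standard: you take products of at most $\ell(w)$ elements of $F$ and check each with the solvable word problem of $G$. This is the usual argument that a finitely generated subgroup with recursively bounded distortion has decidable membership. Hardcoding $F$, with a chosen word in $A$ for each element, is legitimate, since the statement only asserts that an algorithm exists for a fixed $H$.

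Two small points. First, the generating set $S$ of $H$ that you fix at the start is never used; your argument produces one, namely $F$. Second, you should take $A$ to be a generating set with respect to which $H$ is quasi-convex. For non-hyperbolic $G$, quasi-convexity can depend on the generating set, but decidability of membership does not, so this costs nothing. For the stable subgroups the paper cares about, every $A$ works.

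What your route buys is transparency: it makes plain that the word problem of $G$ is the only obstruction. That is exactly what the example of Proposition~\ref{propx:counterexample} exploits. What it does not give is a uniform procedure taking $k$ and generators of $H$ as input, where computing $F$ is the real issue, as you noted. It also gives no reasonable complexity bound, since the search is exponential in $\ell(w)$ on top of the cost of the word problem.
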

Stable subgroups are necessarily quasi-convex, so the membership problem for stable subgroups is decidable without the Morse local-to-global assumption in the case where $G$ has decidable word problem. One can easily construct a Morse local-to-global group with undecidable word problem from an arbitrary group with undecidable word problem.

\begin{lemma}\label{lem:undecideable-word}
    Let $G$ be a finitely presented group with undecidable word problem. The finitely presented groups $G \times \mathbb{Z}$ and $(G \times \mathbb{Z}) * \mathbb{Z}$ are Morse local-to-global groups with undecidable word problem. The former is Morse limited, and the latter is relatively hyperbolic and contains Morse elements.
\end{lemma}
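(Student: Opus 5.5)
The plan is to prove the four assertions of the lemma one at a time: finite presentation, undecidable word problem, the Morse local-to-global property, and the two structural claims.

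\emph{Finite presentation.} This is immediate. Concatenating a finite presentation of $G$ with the generator $t$ and the relations $[s,t]$ for each generator $s$ of $G$ gives a finite presentation of $G\times\mathbb{Z}$. The free product $(G\times\mathbb{Z})*\mathbb{Z}$ is then presented by the disjoint union of this presentation with one free generator.

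\emph{Undecidable word problem.} $G$ is a finitely generated subgroup of both groups, and decidability of the word problem passes to finitely generated subgroups. Indeed, a word in the generators of $G$ is a word in the ambient generating set, and it is trivial in $G$ exactly when it is trivial in the ambient group. So an algorithm for the larger group would solve the word problem in $G$.

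\emph{$G\times\mathbb{Z}$.} Both factors are infinite: $G$ is infinite because finite groups have solvable word problem. I will use the standard fact that a direct product of two infinite finitely generated groups has no Morse geodesic rays, and hence only bounded Morse quasi-geodesics. The heuristic is that any quasi-geodesic can be pushed off in the other factor, giving flat-like detours. I expect to cite this rather than reprove it, e.g.\ Cordes--Russell--Spriano--Zalloum's treatment of Morse limited groups or the fact that products have trivial Morse boundary. By definition this makes $G\times\mathbb{Z}$ Morse limited. Morse limited groups are Morse local-to-global vacuously, since every sufficiently long local Morse quasi-geodesic would have to be Morse. I will invoke the result in \cite{CRSZ} that Morse limited groups are Morse local-to-global.

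\emph{$(G\times\mathbb{Z})*\mathbb{Z}$.} A free product is hyperbolic relative to its factors. So this group is hyperbolic relative to $\{G\times\mathbb{Z},\mathbb{Z}\}$, or simply relative to $\{G\times\mathbb{Z}\}$, since $\mathbb{Z}$ is hyperbolic. The generator $u$ of the free $\mathbb{Z}$ factor acts loxodromically on the Bass--Serre tree with trivial edge stabilizers, so it is a Morse element; its axis is a quasi-geodesic that only touches the peripheral cosets boundedly. The Morse local-to-global property then comes from the theorem in \cite{CRSZ} (Russell--Spriano--Tran style combination): a group hyperbolic relative to Morse local-to-global peripherals is Morse local-to-global. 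The peripheral $G\times\mathbb{Z}$ is Morse local-to-global by the previous step, and $\mathbb{Z}$ trivially is.

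\emph{Main obstacle.} The delicate point is the Morse-limited claim for $G\times\mathbb{Z}$ when $G$ is an arbitrary finitely presented group about which we know nothing geometric. I would first try to rely only on $G$ being infinite, using the product structure to rule out Morse rays. If the cited result requires more, a fallback is to argue directly that any Morse quasi-geodesic must be bounded in the $\mathbb{Z}$-direction and in the $G$-direction simultaneously, which forces it to be bounded.
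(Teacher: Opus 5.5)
Your proposal is correct and follows essentially the paper's outline. $G\times\mathbb{Z}$ is Morse limited and hence Morse local-to-global; $(G\times\mathbb{Z})*\mathbb{Z}$ gets the Morse local-to-global property from the free-product (equivalently, relatively hyperbolic) combination result, with the free generator as a Morse element; and the undecidable word problem is inherited from the subgroup $G$. The differences are minor: the paper obtains Morse limitedness from the infinite central $\mathbb{Z}$ rather than from a product of two infinite groups, citing \cite{RST}, which is also where the results you attribute to \cite{CRSZ} (Morse limited implies Morse local-to-global, and the relatively hyperbolic combination result) are found. Your observation that $G$ must be infinite, so that $G\times\mathbb{Z}$ is not virtually cyclic, is precisely the hypothesis that either citation tacitly needs.
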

\begin{proof}
    Recall from \cite{RST} that any group with an infinite central subgroup is Morse limited, so $G \times \mathbb{Z}$ is Morse limited and therefore Morse local-to-global. A free product of Morse local-to-global groups is Morse local-to-global, and it is straightforward to verify that if we choose generators $s$ and $t$ such that $(G \times \mathbb{Z}) * \mathbb{Z} \cong (G \times \langle s \rangle ) * \langle t \rangle$, the element $t$ is Morse. Indeed, any word in the free product is of the form $w_{1, (G \times \langle s \rangle)} t^i w_{2, (G \times \langle s \rangle)} t^j \cdots$ where $w_{i, (G \times \langle s \rangle}$ is any word in $G \times \langle s \rangle$, and such a word is a $(k,c)$ quasi-geodesic with endpoints on $\langle t \rangle$ precisely when each $w_i$ is a uniformly short loop in $G \times \langle s \rangle$. 

    The word problem is decidable in a direct (free) product precisely when it is decidable in each factor. Thus neither $G \times \mathbb{Z}$ nor $(G \times \mathbb{Z}) * \mathbb{Z}$ has decidable word problem.
\end{proof}

In a similar way, one can show the following.
\begin{proposition}
    Let $G$ be a finitely presented group with unsolvable word problem. The group $(G \times \mathbb{Z}) * \mathbb{Z}$ is a relatively hyperbolic and Morse local-to-global group which contains a stable subgroup with unsolvable membership problem.
\end{proposition}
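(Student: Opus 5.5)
Let $\Gamma = (G \times \langle s \rangle) * \langle t \rangle$. Lemma \ref{lem:undecideable-word} already shows that $\Gamma$ is finitely presented, Morse local-to-global and relatively hyperbolic: it is hyperbolic relative to the peripheral subgroup $G \times \langle s \rangle$, since a free product is hyperbolic relative to its factors. So only the stable subgroup remains to be produced. The natural candidate is a conjugate of $\langle t \rangle$, or a free subgroup built from $t$ and elements of $G$, chosen so that deciding membership would decide the word problem in $G$.

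Concretely, for a word $u$ in the generators of $G$, I would consider the element $u t u^{-1}$, or better a single fixed subgroup. Fix a fixed subgroup $H$ and reduce the word problem to membership in it. The simplest choice is $H = \langle t \rangle$. For a word $u$ over the generators of $G$, the element $u t u^{-1} t^{-1}$ lies in $\langle t \rangle$ if and only if $u t u^{-1} \in \langle t \rangle$. By the normal form in the free product, this happens if and only if $\overline{u} = 1$ in $G$. Indeed, if $u \neq 1$ then $u t u^{-1}$ is a reduced alternating word of syllable length $3$, and it is not a power of $t$. Hence an algorithm deciding membership in $\langle t \rangle$ would decide the word problem of $G$, contradicting the hypothesis on $G$.

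It remains to show that $\langle t \rangle$ is stable in $\Gamma$. Lemma \ref{lem:undecideable-word} shows that $t$ is a Morse element. In general an infinite cyclic subgroup generated by a Morse element is stable. This is standard: a Morse element has quasi-isometrically embedded, Morse orbits, which is the Durham--Taylor characterization, equivalent to the CRSZ definition by \cite{Cordes-Hume-stability}. Alternatively, I would verify it directly with respect to a generating set $A \cup \{s,t\}$, where $A$ generates $G$. Any geodesic from $e$ to $t^n$ must pass through the cut points $t^i$ of the Bass--Serre-type tree structure, and any detour into a coset of $G \times \langle s \rangle$ must be a closed loop. Geodesicity then forces the geodesic to be exactly the word $t^n$. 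So the geodesic lies in $\langle t \rangle$ (giving $k = 0$), and its Morse gauge is the one from Lemma \ref{lem:undecideable-word}.

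The main obstacle is making the stability argument honest for arbitrary finite generating sets. The definition quantifies over all of them, and with a general generating set the letters need not respect the free product decomposition. I would handle this by invoking quasi-isometry invariance of stability: change generating sets and use the fact that a Morse quasi-geodesic with endpoints on a subgroup whose orbit is quasi-isometrically embedded yields stability. I would rely on \cite{Durham-Taylor-stability,Cordes-Hume-stability} for this rather than redoing it. The membership reduction itself is elementary, via normal forms in the free product.
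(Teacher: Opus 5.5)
Your proposal is correct and is essentially the paper's proof: you use the same stable subgroup $\langle t \rangle$ and reduce membership to the word problem of $G$ via normal forms in $(G \times \langle s \rangle) * \langle t \rangle$. The differences are cosmetic. You test the conjugate $utu^{-1}$ where the paper tests $wt^k$; both work, as does $u$ itself, since $\langle t \rangle \cap (G \times \langle s \rangle) = \{1\}$. Your appeal to quasi-isometry invariance to pass from the standard generating set to arbitrary finite generating sets spells out a step the paper asserts in one line.
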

\begin{proof}
    Let $(G \times \mathbb{Z}) * \mathbb{Z} \cong (G \times \langle s \rangle) * \langle t \rangle$ as before. As in Lemma \ref{lem:undecideable-word}, $t$ is a Morse element, i.e., $\langle t \rangle$ is stable. Consider the word $w_{(G \times \langle s \rangle)} t^k$, where $w_{(G \times \langle s \rangle)} \in G \times \langle s \rangle$ and $k$ is any integer. Since there are no relations between words $w \in G \times \langle s \rangle$ and powers of $t$, determining whether such an element belongs to $\langle t \rangle$ is precisely equivalent to determining if $w_{(G \times \langle s \rangle)}$ is the identity element of $G \times \langle s \rangle$, which is algorithmically undecideable by choice of $G$.
\end{proof}

\bibliographystyle{acm}
\bibliography{refs.bib} 

@article{reducibles-in-CAL,
  author = {Antol\'{i}n, Yago and Cumplido, Mar\'{i}a},
  title = {Parabolic subgroups acting on the additional length graph},
  journal = {Algebraic \& Geometric Topology},
  year = 2021,
  volume = 21,
  pages = {1791–1816}}

@article{HHS2,
  title={Hierarchically hyperbolic spaces {II}: {C}ombination theorems and the distance formula},
  author={Behrstock, Jason A. and Hagen, Mark F. and Sisto, Alessandro},
  journal={Pacific Journal of Mathematics},
  year={2015},
}

@article{DFW,
  author = {Dahmani, Fran\c{c}ois and Futer, David and Wise, Daniel},
  title = {Growth of quasiconvex subgroups},
  journal = {Mathematical Proceedings of the Cambridge Philosophical Society},
  year = 2019,
  volume = 163,
  pages = {505-530}}

@article{CRSZ,
  author = {Cordes, Matthew and Russell, Jacob and Spriano, Davide and Zalloum, Abdul},
  title = {Regularity of {M}orse geodesics and growth of stable subgroups},
  journal = {Journal of Topology},
  year = 2022,
  volume = 15,
  pages = {1217–1247}}

@article{RST,
  author = {Russell, Jacob and Spriano, Davide and Tran, Hung Cong},
  title = {The local-to-global property for {M}orse quasi-geodesics},
  journal = {Mathematische Zeitschrift},
  year = 2022,
  volume = 300,
  pages = {1557–1602}}

@incollection{Cannon-regular-language,
  booktitle={Ergodic theory, symbolic dynamics and hyperbolic spaces},
  title={The theory of negatively curved spaces and groups},
  author={Cannon, J.W.},
  publisher={Oxford Sci. Publ., New York},
  pages={315--369},
  year={1991}
}

@article{Gersten-Short,
  title={Rational subgroups of biautomatic groups},
  author={Gersten, S.M. and Short, H.B.},
  journal={Annals of Mathematics},
  volume={134},
  number={1},
  pages={125--158},
  year={1991}
}

@article{Abbott-Dahmani,
  title={Acylindrically hyperbolic groups have property ${P}_{naive}$},
  author={Abbott, Carolyn and Dahmani, Fran\c{c}ois},
  journal={Mathematiche Zeitschrift},
  volume={291},
  pages={555--568},
  year={2019}
}

@article{Abbott-Hull,
  title={Random walks and quasi-convexity in acylindrically hyperbolic groups},
  author={Abbott, Carolyn and Hull, Michael},
  journal={Journal of Topology},
  volume={14},
  number={3},
  pages={992--1026},
  year={2021}
}

@article{Growth-rate-gap-all-Mltg,
  title={Growth rate gap for stable subgroups},
  author={Han, Suzhen and Liu, Qing},
  journal={arXiv:2412.11244},
  year={2024}
}

@article{Durham-Taylor-stability,
  title={Convex cocompactness and stability in mapping class groups},
  author={Durham, Matthew Gentry and Taylor, Samuel J},
  journal={Algebraic \& Geometric Topology},
  volume={15},
  pages={2837--2857},
  year={2015}
}

@article{Cordes-Hume-stability,
  title={Stability and the {M}orse boundary},
  author={Cordes, Matthew and Hume, David},
  journal={Journal of the London Mathematical Society},
  volume={95},
  number={2},
  pages={963--988},
  year={2017}
}

@article{Aougab-Durham-Taylor-OutFn,
  title={Pulling back stability with applications to $\mathrm{{O}ut}({F}_n)$},
  author={Aougab, Tariq and Durham, Matthew and Taylor, Samuel},
  journal={Journal of the London Mathematical Society},
  volume={96},
  number={2},
  pages={565--583},
  year={2017}
}

@incollection{Gitik-membership,
  author={Gitik, Rita},
  title={Two algorithms in group theory},
  editor={Baginski, P. and Fine, B. and Moldenhauer, A. and Rosenberger, G. and Shpilrain, V.},
  booktitle={Elementary Theory of Groups and Group Rings, and Related Topics: Proceedings of the Conference held at Fairfield University and at the Graduate Center, CUNY},
  publisher={De Gruyter},
  address={Berlin, Boston},
  year={2020},
  pages={73-80},
  chapter={Two algorithms in group theory}
}

@article{Suzuki-handlebody-pres,
  title={On homeomorphisms of a 3-dimensional handlebody},
  author={Suzuki, Shin'ichi},
  journal={Canadian Journal of Mathematics},
  volume={29},
  number={1},
  pages={111--124},
  year={1977}
}

@article{Wajnryb-handlebody-pres,
  title={Mapping class group of a handlebody},
  author={Wajnryb, Bronis\law},
  journal={Fundamenta Mathematicae},
  volume={158},
  year={1998}
}

@article{Johnson-Torelli-one,
  title={The Structure of the {T}orelli Group {I}: A Finite Set of Generators for $\mathcal{I}$},
  author={Johnson, Dennis},
  journal={Annals of Mathematics},
  volume={118},
  number={3},
  pages={423--442},
  year={1983}
}

@article{Johnson-Torelli-three,
  title={The Structure of the {T}orelli Group {III}: The abelianization of $\mathcal{I}$},
  author={Johnson, Dennis},
  journal={Topology},
  volume={24},
  number={2},
  pages={127--144},
  year={1985}
}

@article{Chesser-handlebody-not-HHG,
  title={Stable subgroups of the genus two handlebody group},
  author={Chesser, Marissa},
  journal={Algebraic \& Geometric Topology},
  volume={22},
  number={2},
  pages={919--971},
  year={2022}
}

@article{Cannon-regular-geodesics,
  title={The combinatorial structure of cocompact discrete hyperbolic groups},
  author={Cannon, James W.},
  journal={Geometriae Dedicata},
  volume={16},
  number={2},
  year={1984}
}

@article{Gitik-ping-pong,
  title={Ping-pong on negatively curved groups},
  author={Gitik, Rita},
  journal={Journal of Algebra},
  volume={217},
  number={1},
  year={1999}
}

@article{Arzhantseva-qconvex-hyperbolic,
  title={On quasiconvex subgroups of word hyperbolic groups},
  author={Arzhantseva, G.N.},
  journal={Geometriae Dedicata},
  volume={87},
  pages={191--208},
  year={2001}
}

@inproceedings{Hensel-handlebody-primer,
  title={A PRIMER ON HANDLEBODY GROUPS},
  author={Hensel, Sebastian},
  year={2018},
  url={https://api.semanticscholar.org/CorpusID:211542546}
}

@article{pA-generic-Torelli,
  title={On genericity of pseudo-{A}nosovs in the {T}orelli group},
  author={Malestein, Justin and Souto, Juan},
  journal={International Mathematics Research Notices},
  volume={2013},
  issue={6},
  pages={1434--1449},
  year={2013}
}

@article{cstar-simple-rel-hyp,
  title={Relatively hyperbolic groups are ${C}^{*}$-simple},
  author={Arzhantseva, G. and Minasyan, A.},
  journal={Journal of Functional Analysis},
  volume={243},
  number={1},
  pages={345--351},
  year={2007}
}

@InProceedings{Gromov-hyp-groups,
    author = {Gromov, M.},
    title = {Hyperbolic groups},
    booktitle = {Essays in Group Theory},
    series = {Mathematical Sciences Research Institute Publications},
    year = 1987,
    publisher = {Springer New York, NY},
    volume={8},
    editor = {S.M. Gersten},
    pages = {75-263},
}

@article{hamenstadt-hensel-handlebody,
  title={The Geometry of the Handlebody Groups {I}{I}: {D}ehn Functions},
  author={Hamenst\"{a}dt, Ursula and Hensel, Sebastian},
  journal={Michigan Mathematical Journal},
  volume={70},
  number={1},
  pages={23--53},
  year={2021}
}

@book {MCG-primer,
    AUTHOR = {Farb, Benson and Margalit, Dan},
     TITLE = {A primer on mapping class groups},
    SERIES = {Princeton Mathematical Series},
    VOLUME = {49},
 PUBLISHER = {Princeton University Press, Princeton, NJ},
      YEAR = {2012},
     PAGES = {xiv+472},
      ISBN = {9780691147949}
}

@article{Minsky-Teich-projections,
  title={Quasi-projections in {T}eichm\"{u}ller space},
  author={Minsky, Yair},
  journal={Journal f\"{u}r die reine und angewandte {M}athematik},
  volume={473},
  pages={121--136},
  year={1996}
}

@article{Arzhantseva-Cashen-Gruber-Hume-contraction,
  title={Characterizations of {M}orse quasi-geodesics via superlinear divergence and sublinear contraction},
  author={Arzhantseva, Goulnara N. and Cashen, Christopher H. and Gruber, Dominik and Hume, David},
  journal={Documenta Mathematica},
  volume={22},
  pages={1193--1224},
  year={2017}
}

@article{Holt-Rees-qgeod-regular,
  title={Regularity of quasi-geodesics in a hyperbolic group},
  author={Holt, Derek and Rees, Sarah},
  journal={International Journal of Algebra and Computation},
  volume={13},
  number={05},
  pages={585--596},
  year={2003}
}

@article{Sam-Patrick-Davide-qgeod-regular,
  title={Regularity of quasi-geodesics characterizes hyperbolicity},
  author={Hughes, Sam and Nairne, Patrick and Spriano, Davide},
  journal={Proceedings of the Royal Society of Edinburgh Section A: Mathematics},
  pages={1--14},
  year={2025}
}

@article{Abdul-Alex-injective-contracting,
  title={Morse subsets of injective spaces are strongly contracting},
  author={Sisto, Alessandro and Zalloum, Abdul},
  journal={Groups, Geometry and Dynamics},
  volume={19},
  pages={1425--1443},
  year={2025}
}

@article{Legaspi-path-system-growth,
  title={Growth of quasi-convex subgroups in groups with
a constricting element},
  author={Legaspi, Xabier},
  journal={Groups, Geometry and Dynamics},
  volume={18},
  number={4},
  pages={1469--1505},
  year={2024}
}

\end{document}